\renewcommand{\leq}{\leqslant}
\newcommand{\piBaseS}[1]{(\href{https://topology.pi-base.org/spaces/S#1}{S{#1}})}
\newcommand{\piBaseP}[1]{(\href{https://topology.pi-base.org/properties/P#1}{P{#1}})}
\newtheoremstyle{mythm}
{.5\baselineskip}	
{.5\baselineskip}	
{}		
{}		
{\bf}	
{. }		
{ }		
{}		
\theoremstyle{mythm}
\newtheorem{theorem}{Theorem}[section]	
\newtheorem{lemma}[theorem]{Lemma}
\newtheorem{proposition}[theorem]{Proposition}
\newtheorem{corollary}[theorem]{Corollary}
\newtheorem{definition}[theorem]{Definition}
\newtheorem{example}[theorem]{Example}
\newtheorem*{remark}{Remark}
\newtheorem{question}{Question}
\newcommand{\term}{\textbf}
\newcommand{\tuple}[1]{\left\langle{#1}\right\rangle}
\title{Separation Axioms Among US}
\author{Steven Clontz}
\address{College of Arts and Sciences, University of South Alabama, 411 N University Blvd North, Mobile, AL 36688, USA}
\email{sclontz@southalabama.edu}
\author{Marshall Williams}
\address{Unafilliated}
\date{\today}
\subjclass{}
\keywords{compactly generated, weak Hausdorff, separation axioms}
\begin{document}

\maketitle

\begin{abstract}
   A standard introductory result is that Hausdorff spaces have the 
   property US, that is,
   each convergent sequence has a unique limit. This paper explores
   several existing and new characterizations of separation axioms
   that are strictly weaker than $T_2$ but strictly stronger than US.
\end{abstract}

\section{Introduction}

The following spectrum of separation
axioms between \(T_1\) and \(T_2\)
was recently surveyed in \cite{2312.08328}.

\[T_2\Rightarrow k_1\text{H}\Rightarrow \text{KC}\Rightarrow
\text{wH}\Rightarrow k_2\text{H}\Rightarrow \text{US}\Rightarrow T_1.\]

None of these implications reverses. For example,
a copy of \(\omega_1+1\) with a doubled endpoint 
\piBaseS{37}\footnote{ID as assigned in the \(\pi\)-Base 
community database of topological counterexamples \cite{PiBase}.}
is an example of a US space (since all non-trivial converging
sequences lay within Hausdorff \(\omega_1\)) which turns
out to not be \(k_2H\).

During a discussion of these properties at the Carolinas
topology seminar, Alan Dow observed that while S37 satisfies
US, it does not have unique limits for all \emph{transfinite} sequences.
To this end, the first author asked a highly-upvoted\footnote{
Albeit, this score was accrued mainly
due its farcical reference of the ``Among Us''/``SUS'' internet memes.
} question \cite{mathse:4778063} on the Mathematics StackExchange forum, seeking an example
of such a ``SUS'' or ``strongly US''
space which is not \(k_2\)H. Such an example was eventually provided by
the second author, which led to the broader work explored in this 
manuscript.


\section{Preliminaries}

The reader is referred to $\pi$-Base \cite{PiBase}
for any definitions not given here.
Consider the following weakenings of $T_2$, which strictly imply $T_1$.

\begin{definition}
    A space is \term{$k_1$-Hausdorff}  \piBaseP{170}
    ($k_1$H for short) provided
    every compact subspace is Hausdorff.
\end{definition}

\begin{definition}
    A space is \term{$k_2$-Hausdorff} \piBaseP{171}
    ($k_2$H for short) provided
    for every compact Hausdorff space $K$ and
    continuous map $f:K\to X$ and points $k,l\in K$ with $f(k)\not=f(l)$,
    there exist open neighborhoods $U,V$ of $k,l$ with $f[U],f[V]$
    disjoint.
\end{definition}

\begin{definition}
    A space is \term{US} \piBaseP{99}
    (``Unique Sequential convergence'') provided
    every convergent sequence has exactly one limit.
\end{definition}

Our investigation will center on the following properties.

\begin{definition}
    A \term{sequence} is a function from $\omega$ into a 
    topological space, while a
    \term{transfinite sequence} is a function
    from some limit ordinal into a topological space. Note that
    while every ($\omega$-length) sequence is
    \term{continuous}, this is not be the case for
    transfinite sequences of longer length.
    
    A \term{limit}
    of a transfinite sequence is a point of the space for
    which every neighborhood contains a final subsequence, that is,
    there exists some point of the sequence for which all
    subsequent points belong to the neighborhood.
    Such transfinite sequences are said to \term{converge}.
\end{definition}

\begin{definition}
    A space is \term{UR} (``Unique Radial convergence'') provided
    any limit of a transfinite sequence is unique.
\end{definition}

\begin{definition}
    A locally compact \piBaseP{130} space $X$ is \term{UOK}
    (``Unique One-point Kompactification'')
    if for every non-compact locally compact Hausdorff $L$ and
    continuous function $f:L\to X$, there exists at most one
    continuous extension $\hat f:\hat L\to X$ where $\hat L=L\cup\{\infty\}$
    is the one-point compactification of $L$. (We call this
    $\hat f(\infty)$ the \term{compactifying limit} of $f$.)
\end{definition}

\begin{definition}
    A space is \term{UCR} (``Unique C-Radial convergence'', 
    called ``SUS'' at \cite{mathse:4778063}) provided
    any limit of a \emph{continuous} transfinite sequence is unique.
\end{definition}





Justification for the choice of some of this terminology is
owed to the reader.

\begin{definition}
A space is \term{Frech\'et-Urysohn} \piBaseP{80} provided
for each $A\subseteq X$ and each $p\in \overline A$ there is a 
sequence of points of $A$ converging to $p$.
\end{definition}

\begin{definition}
A space is \term{sequential} \piBaseP{79} provided
every sequentially closed set is closed, where a set
$A\subseteq X$ is 
\term{sequentially closed} if it contains the limits of all
convergent sequences consisting of points of $A$.  In other words, 
for every non-closed set $A\subseteq X$ there is a point 
$p\in \overline A\setminus A$ and a sequence of points 
of $A$ that converges to $p$.
\end{definition}

\begin{definition}
A space is \term{C-radial} provided
for each $A\subseteq X$ and each $p\in \overline A$ there is a 
\emph{continuous} transfinite sequence of points of $A$ converging to $p$.
\end{definition}

\begin{definition}
A space is \term{pseudo-C-radial} provided
every C-radially closed set is closed,
where a set $A\subseteq X$ is 
\term{C-radially closed} if it contains the limits of all convergent 
\emph{continuous} transfinite sequences consisting of points of $A$.  In other words, 
for every non-closed set $A\subseteq X$ there is a point 
$p\in \overline A\setminus A$ and a 
\emph{continuous} transfinite sequence of points 
of $A$ that converges to $p$.
\end{definition}

\begin{definition}
A space is \term{radial} \piBaseP{172} provided
for each $A\subseteq X$ and each $p\in \overline A$ there is a 
transfinite sequence of points of $A$ converging to $p$.
\end{definition}

\begin{definition}
A space is \term{pseudoradial} \piBaseP{173} provided
every radially closed set is closed, where a set $A\subseteq X$ is 
\term{radially closed} if it contains the limits of all convergent 
transfinite sequences consisting of points of $A$.  In other words, 
for every non-closed set $A\subseteq X$ there is a point 
$p\in \overline A\setminus A$ and a transfinite sequence of points 
of $A$ that converges to $p$.
\end{definition}

The following diagram follows immediately.

\[\begin{tikzcd}
	{\text{Frech\'et-Urysohn}} & {\text{sequential}} \\
	{\text{C-radial}} & {\text{pseudo-C-radial}} \\
	{\text{radial}} & {\text{pseudoradial}}
	\arrow[from=1-1, to=1-2, Rightarrow]
	\arrow[from=1-1, to=2-1, Rightarrow]
	\arrow[from=1-2, to=2-2, Rightarrow]
	\arrow[from=2-1, to=2-2, Rightarrow]
	\arrow[from=2-1, to=3-1, Rightarrow]
	\arrow[from=2-2, to=3-2, Rightarrow]
	\arrow[from=3-1, to=3-2, Rightarrow]
\end{tikzcd}\]

The authors of \cite{zbMATH06427214} use the term
``strongly pseudoradial'' rather than
pseudo-C-radial, and the term
``unique strongly pseudoradial convergence'' rather than
unique C-radial convergence. Our alternative
terminology is based upon the standard term radially closed
used to characterize pseudoradial in \cite{bella2003pseudoradial},
and also the radial property itself,
neither of which not considered in \cite{zbMATH06427214}. It's unclear, if
one chooses the term ``unique strongly pseudoradial convergence'',
what ``unique strongly \emph{radial} convergence'' should mean. Additionally,
it's immediate that every radially closed set is C-radially closed,
but not the converse;
it would be awkward to choose terminology that would require that
``strongly radially closed'' not imply radially closed.

So we restate some results of \cite{zbMATH06427214} using our refactored
language.

\begin{proposition}[Theorem 3.5 of \cite{zbMATH06427214}]\label{thm35of}
    A space $X$ is pseudo-C-radial if and only if for every
    for every non-closed set $A\subseteq X$ there is a point 
    $p\in \overline A\setminus A$ and an \emph{injective} and
    continuous transfinite sequence of points 
    of $A$ that converges to $p$, whose domain is a regular cardinal.
\end{proposition}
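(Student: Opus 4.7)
The reverse direction is immediate: for any non-closed $A$, the hypothesized sequence shows $A$ is not C-radially closed, so the stated condition implies pseudo-C-radiality. For the forward direction, I would fix a non-closed $A\subseteq X$ and start from a continuous transfinite sequence $s:\lambda\to A$ converging to some $p\in\overline{A}\setminus A$ supplied by pseudo-C-radiality; the plan is to upgrade $s$ in two stages, first shrinking the domain to a regular cardinal and then arranging injectivity.

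For the first stage, let $\kappa=\mathrm{cf}(\lambda)$ and pick a strictly increasing, continuous, cofinal $\iota:\kappa\to\lambda$. The composition $s\circ\iota$ still converges to $p$ by cofinality, and is continuous: at any limit $\alpha<\kappa$, $\iota(\alpha)=\sup_{\beta<\alpha}\iota(\beta)$ is a limit ordinal in $\lambda$ and $\{\iota(\beta):\beta<\alpha\}$ is cofinal in $\iota(\alpha)$, so convergence of $s\restriction\iota(\alpha)$ to $s(\iota(\alpha))$ transfers to $s\circ\iota\restriction\alpha$. Replacing $s$ by $s\circ\iota$, I may assume $\lambda=\kappa$ is a regular cardinal.

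For the second stage I would invoke $T_1$, which is natural given the paper's spectrum of axioms strictly between $T_1$ and $T_2$. If $q\in s[\kappa]$ then $q\ne p$, so $X\setminus\{q\}$ is an open neighborhood of $p$; were $s^{-1}(q)$ cofinal in $\kappa$, arbitrarily late terms of $s$ would escape this neighborhood, contradicting $s\to p$. Hence each $s^{-1}(q)$ is bounded in $\kappa$. Define $\sigma:\kappa\to\kappa$ recursively by $\sigma(0)=0$, $\sigma(\alpha)=\sup_{\beta<\alpha}\sigma(\beta)$ at limit $\alpha$, and
\[
\sigma(\alpha+1)=\max\!\Bigl(\sigma(\alpha)+1,\ \sup_{\beta\leq\alpha}\bigl(\sup s^{-1}(s(\sigma(\beta)))+1\bigr)\Bigr),
\]
with each stage bounded below $\kappa$ by regularity. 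Then $\sigma$ is strictly increasing and continuous, so $s\circ\sigma$ is a continuous transfinite sequence in $A$ on the regular domain $\kappa$ converging to $p$. Successor-stage injectivity holds by design; at a limit $\alpha$, if $s(\sigma(\alpha))=s(\sigma(\beta))$ for some $\beta<\alpha$, then $\sigma(\beta+1)>\sup s^{-1}(s(\sigma(\beta)))\geq\sigma(\alpha)$ while $\beta+1<\alpha$ forces $\sigma(\beta+1)<\sigma(\alpha)$, a contradiction. I expect maintaining injectivity at limit stages to be the main obstacle, and the bounded-preimage consequence of $T_1$ is exactly what resolves it.
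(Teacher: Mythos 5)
Your argument is essentially correct, but note that the paper does not prove this proposition at all: it is quoted verbatim as Theorem 3.5 of \cite{zbMATH06427214}, so there is no in-house proof to compare against. The closest analogue in the paper is Theorem \ref{ur-inj}, which runs the same two-step reduction you propose --- show the fibers of the sequence are bounded, then pass to a closed unbounded subset of the domain on which the sequence is injective and whose order type is a regular cardinal --- but outsources the combinatorial core to Lemmas 3.1 and 3.4 of \cite{zbMATH06427214}. Your recursion defining $\sigma$ is in effect a self-contained proof of that club-selection lemma, and the details check out: the cofinality reduction preserves continuity and convergence, regularity of $\kappa$ keeps each stage of the recursion below $\kappa$, and the limit-stage injectivity argument via $\sigma(\beta+1)>\sup s^{-1}(s(\sigma(\beta)))$ is sound.

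The one substantive caveat is your appeal to $T_1$, which appears nowhere in the statement. You are right that something like it is genuinely needed: in the Sierpi\'nski space, $A=\{0\}$ is not closed and the constant sequence at $0$ converges to $1$, so the space is pseudo-C-radial, yet no \emph{injective} transfinite sequence into the singleton $A$ can exist, so the displayed equivalence fails there. The hypothesis is presumably a standing assumption inherited from \cite{zbMATH06427214} (and the paper's own Theorem \ref{ur-inj} states $T_1$ explicitly), but as a proof of the proposition \emph{as printed} your argument has a gap exactly where you conclude each fiber $s^{-1}(q)$ is bounded: without $T_1$ you cannot separate $p$ from the points of $A$, and both that step and the proposition itself can fail. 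Flagging the needed hypothesis, as you did, is the right call; ideally you would also note that the statement is false without it.
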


\begin{proposition}[Corollary 4.2 of \cite{zbMATH06427214}]
    Suppose $X$ is pseudo-C-radial. Then $X$ is UCR if and only if 
    it is \term{KC} (Kompacts are Closed) \piBaseP{100}.
\end{proposition}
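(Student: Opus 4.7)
My plan is to prove the two implications separately; $(\Leftarrow)$ goes through without using pseudo-C-radiality. Assuming $X$ is KC and letting $f\colon\kappa\to X$ be a continuous transfinite sequence with distinct limits $p,q$, the continuous extension $\bar f\colon\kappa+1\to X$ with $\bar f(\kappa)=p$ has compact image $f[\kappa]\cup\{p\}$, which is closed by KC. Thus $q\in\overline{f[\kappa]}\subseteq f[\kappa]\cup\{p\}$ together with $q\neq p$ forces $q=f(\alpha)$ for some $\alpha<\kappa$; since KC implies $T_1$, the open set $X\setminus\{q\}$ is a neighborhood of $p$, so the convergence $f\to p$ bounds $f^{-1}(\{q\})$ by some $\alpha_0<\kappa$. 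Restricting to the limit-ordinal tail $[\alpha_0+1,\kappa)$ and repeating the argument yields $q=f(\beta)$ for some $\beta>\alpha_0$, a contradiction.

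For $(\Rightarrow)$, I argue the contrapositive: assume $X$ is pseudo-C-radial and UCR, and let $K\subseteq X$ be compact but not closed. Proposition~\ref{thm35of} applied to $K$ gives a regular cardinal $\kappa$, a point $p^*\in\overline K\setminus K$, and a continuous injective $f\colon\kappa\to K$ with $f\to p^*$. UCR implies $T_1$, so $K$ is compact $T_1$; the infinite set $f[\kappa]$ therefore admits a complete accumulation point $q\in K$, and injectivity makes $\{\alpha<\kappa:f(\alpha)\in U\}$ cofinal in $\kappa$ for every neighborhood $U$ of $q$. After shifting $f$ to a cofinal tail I may assume $q\notin f[\kappa]$; note $q\neq p^*$ (as $q\in K$, $p^*\notin K$), and every neighborhood of $q$ meets every neighborhood of $p^*$ through points of $f[\kappa]$.

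The plan is now to produce a single continuous transfinite sequence converging to both $q$ and $p^*$, contradicting UCR. I aim to construct such a sequence as a cofinal sub-sequence $g=f\circ\phi\colon\lambda\to X$ of $f$: if $\phi\colon\lambda\to\kappa$ is injective with cofinal image---automatic when $\lambda=\kappa$, since an injection of a regular cardinal into itself has image of size $\kappa$ and is therefore cofinal---then tails of $g$ sit inside tails of $f$, so $g$ inherits the limit $p^*$ from $f$, and it then suffices to ensure $g\to q$. Proposition~\ref{thm35of} applied directly to $f[\kappa]$ yields a continuous injective $g\colon\lambda\to f[\kappa]$ converging to some $r\in\overline{f[\kappa]}\setminus f[\kappa]$; alternatively, applying it to $\overline{f[\kappa]}\setminus\{q\}$ (whose closure is $\overline{f[\kappa]}$ with unique missing point $q$) forces the limit to be $q$ but may let the sequence stray into $\overline{f[\kappa]}\setminus f[\kappa]$. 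The main obstacle is to arrange \emph{simultaneously} that the sequence produced by Proposition~\ref{thm35of} has image in $f[\kappa]$, converges to $q$, and has length $\kappa$; I expect this to require a combined or iterated application of Proposition~\ref{thm35of} leveraging the combinatorics of the complete accumulation at $q$, and this is where the pseudo-C-radial hypothesis does its essential work.
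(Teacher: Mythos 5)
The paper does not actually prove this proposition; it is quoted verbatim as Corollary 4.2 of \cite{zbMATH06427214}, so there is no internal proof to compare against. Judged on its own terms, your attempt is half a proof. The ($\Leftarrow$) direction is correct and complete: extending $f$ over $\kappa+1$, applying KC to the compact image $f[\kappa]\cup\{p\}$, using $T_1$ to bound $f^{-1}(\{q\})$, and repeating the argument on a tail is the standard KC $\Rightarrow$ US argument transplanted to continuous transfinite sequences, and you are right that it needs no pseudo-C-radiality.

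The ($\Rightarrow$) direction, however, stops exactly at the decisive step, and you say so yourself. Everything up to the existence of the complete accumulation point $q$ of $f[\kappa]$ in $K$ is fine, but the contradiction with UCR requires exhibiting a single continuous transfinite sequence with two distinct limits, and your plan --- extract from $f$ a continuous, injective, cofinal subsequence of length $\kappa$ converging to $q$ --- is precisely what is never constructed. It is also not clear that such a subsequence exists: Proposition \ref{thm35of} applied to $f[\kappa]$ or to $\overline{f[\kappa]}\setminus\{q\}$ gives no control over the length of the resulting sequence, over whether its range lies in $f[\kappa]$, or over whether it is cofinal in $f$ (for $\kappa>\omega$, any sequence of length $\lambda<\kappa$ extracted from the injective $f$ is trapped in a bounded initial segment $f[[0,\gamma)]$ by regularity of $\kappa$, and then has no reason to converge to $p^*$). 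A workable route, at least in the countable case, uses the hypotheses differently: show via US/UCR that every injective sequence drawn from $f[\omega]$ which converges at all must converge to $p^*$, conclude via pseudo-C-radiality that $f[\omega]\cup\{p^*\}$ is closed, and then $q\in\overline{f[\omega]\setminus\{q\}}\subseteq(f[\omega]\setminus\{q\})\cup\{p^*\}$ forces $q=p^*$, contradicting $q\in K$ and $p^*\notin K$. In other words, the hypotheses serve to compute closures and to identify the limits of extracted subsequences with $p^*$, rather than to manufacture one doubly-convergent sequence. As written, the forward implication is not proved.
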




Finally, we will prove a result akin to Proposition \ref{thm35of} for 
UR and UCR.

\begin{theorem}\label{ur-inj}
    Let $X$ be $T_1$.
    Then $X$ is U(C)R if and only if
    any limit of a (continuous and) injective transfinite sequence,
    whose domain is a regular cardinal, is unique.
\end{theorem}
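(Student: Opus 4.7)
The forward direction is immediate from the definitions: U(C)R asserts that every (continuous) transfinite sequence has at most one limit, so in particular every injective such sequence with regular cardinal domain does. For the converse, I assume $X$ is $T_1$, and toward a contradiction fix a (continuous) transfinite sequence $s:\lambda\to X$ converging to two distinct points $p$ and $q$. The plan is to extract from $s$ a (continuous and) injective transfinite sequence with regular cardinal domain still converging to both $p$ and $q$, contradicting the hypothesis.

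First I would replace $\lambda$ by $\kappa:=\mathrm{cf}(\lambda)$: by a standard recursion taking suprema at limit stages, there is a continuous strictly increasing cofinal map $e:\kappa\to\lambda$, so $s\circ e$ inherits continuity (whenever $s$ is continuous), convergence to both $p$ and $q$, and now has regular cardinal domain. The $T_1$ hypothesis together with $p\neq q$ then yields the key boundedness observation: for every $x\in X$, the preimage $(s\circ e)^{-1}(x)$ is bounded in $\kappa$, because whichever of $p,q$ differs from $x$ has $X\setminus\{x\}$ as an open neighborhood, which must eventually contain the tail of the sequence. Relabel so that $s:\kappa\to X$, and write $f(x):=\sup s^{-1}(x)+1<\kappa$.

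Now I recursively build a strictly increasing sequence $(\alpha_\gamma)_{\gamma<\kappa}$ in $\kappa$, choosing $\alpha_{\gamma+1}>\max(\alpha_\gamma,\sup_{\delta\leq\gamma}f(s(\alpha_\delta)))$ at successor stages (possible by regularity of $\kappa$), and setting $\alpha_\mu:=\sup_{\gamma<\mu}\alpha_\gamma$ at limit stages. Let $t(\gamma):=s(\alpha_\gamma)$. For $\delta<\gamma$ one has $\alpha_\gamma\geq\alpha_{\delta+1}>\sup s^{-1}(s(\alpha_\delta))$, so $s(\alpha_\gamma)\neq s(\alpha_\delta)$, whence $t$ is injective. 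Cofinality of $\{\alpha_\gamma\}$ in $\kappa$ ensures $t$ still converges to both $p$ and $q$, and when $s$ is continuous, the choice $\alpha_\mu=\sup\alpha_\gamma$ at limit stages makes $t$ continuous as well. The delicate point is precisely this limit-stage bookkeeping in the UCR case: continuity forces $\alpha_\mu$ to be the sup and leaves no room to dodge collisions by hand, and it is exactly the boundedness observation (where $T_1$ is essential) that guarantees the forced value $s(\alpha_\mu)$ still avoids every earlier entry of $t$.
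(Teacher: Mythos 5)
Your argument is correct, and it proves the theorem by the same underlying technique as the paper --- passing to a cofinal set on which the sequence is injective and whose order type is a regular cardinal --- but the logical packaging is genuinely different and, in one respect, cleaner. The paper argues directly that every (continuous) transfinite sequence has a unique limit: it first disposes of the case of an unbounded fibre $f^\leftarrow(x_0)$ (where $x_0$ is then the unique limit by $T_1$), then invokes Lemmas 3.1 and 3.4 of the cited reference to produce a club $C$ on which $f$ is injective with $|C|$ regular, and finally must argue separately that the unique limit of $f\upharpoonright C$ is also the unique limit of $f$ itself (the step choosing $\gamma\in C$ with $x\notin \operatorname{cl}(f[C\setminus\gamma])$). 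You instead argue by contradiction from two distinct limits $p\neq q$; this buys you two simplifications. First, the boundedness of every fibre is automatic (whichever of $p,q$ differs from $x$ forces a tail of the sequence into $X\setminus\{x\}$), so no case split is needed. Second, you only need the \emph{forward} transfer --- that a cofinal (continuous) subsequence inherits every limit of the original sequence --- which is immediate, rather than the reverse transfer of non-limits that occupies the paper's final paragraph. Your explicit recursion replacing the cited lemmas is also sound: the successor-stage choice $\alpha_{\gamma+1}>\sup_{\delta\leq\gamma}f(s(\alpha_\delta))$ is available by regularity of $\kappa$, and your closing observation correctly identifies the one delicate point, namely that at limit stages continuity forces $\alpha_\mu=\sup_{\gamma<\mu}\alpha_\gamma$, and injectivity there is rescued precisely because $\alpha_\mu\geq\alpha_{\delta+1}>\sup s^{-1}(s(\alpha_\delta))$ for every $\delta<\mu$. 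The net effect is a self-contained proof that avoids both the external lemmas and the hardest step of the paper's version.
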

\begin{proof}
    The forwards direction is immediate, so assume the latter and
    take an arbitrary (continuous) transfinite sequence $f:\alpha\to X$
    converging to some limit.
    Let $f^\leftarrow(x)=\{\beta<\alpha:f(\beta)=x\}$.

    Suppose we have some $x_0\in X$ with $f^\leftarrow(x)$ unbounded in $\alpha$.
    Then by $T_1$, $X\setminus\{x_0\}$ is an open set that misses a subsequence
    of $f$, so $x_0$ is the unique limit of $f$.

    Otherwise, $\{f^{\leftarrow}(x):x\in X\}$ is a partition of $\alpha$
    into bounded subsets of $\alpha$. By Lemmas 3.1 and 3.4 of
    \cite{zbMATH06427214} there is a closed unbounded subset $C\subseteq\alpha$
    with $f\upharpoonright C$ injective and $|C|$ regular. Note that, since it
    is closed and unbounded, $C$ is isomorphic to the ordinal space $|C|$. So
    we may consider $f\upharpoonright C$ as a (continuous and) injective
    transfinite sequence whose domain is a regular cardinal, and thus
    its limit $x_0$ is unique.

    Finally, let $x\in X\setminus\{x_0\}$, and since $x$ is not a limit of
    $f\upharpoonright C$ we may choose some $\gamma\in C$ with
    $x\not\in cl(f[C\setminus\gamma])$. Then
    $X\setminus cl(f[C\setminus\gamma])$ is a neighborhood of $x$
    missing a subsequence of $f$, showing $x$ is not a limit of $f$,
    and therefore $x_0$ is the unique limit of $f$.
\end{proof}

\section{Connecting $k_2H$ to US}

We now proceed to establish that UOK and UCR are properties intermediate
to $k_2$H and US.

\begin{theorem}
$k_2$H $\Rightarrow$ UOK $\Rightarrow$ UCR $\Rightarrow$ US.
\end{theorem}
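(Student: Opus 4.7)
The plan is to prove the three implications sequentially; the first requires a small compact-Hausdorff construction, and the latter two are essentially direct. Since UOK is defined only for locally compact spaces, the first implication is to be read as ``$k_2H$ plus locally compact implies UOK.''

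For $k_2H\Rightarrow$UOK, I assume $X$ is locally compact and $k_2H$, and that $f:L\to X$ admits two continuous extensions $\hat f_1,\hat f_2:\hat L\to X$ with $x_1:=\hat f_1(\infty)\neq\hat f_2(\infty)=:x_2$. My idea is to work with the disjoint union $\hat L\sqcup\hat L$, which is compact Hausdorff, and paste $\hat f_1,\hat f_2$ into a single continuous $g$ that sends the two infinity points to $x_1$ and $x_2$. Applying $k_2H$ to these two distinct points yields open neighborhoods $U,V$ with $g[U]\cap g[V]=\emptyset$; these may be shrunk to lie in their respective copies of $\hat L$, and hence to contain sets of the form $\{\infty\}\cup(L\setminus C_i)$ for compact $C_i\subseteq L$. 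Non-compactness of $L$ provides some $l\in L\setminus(C_1\cup C_2)$, and then $f(l)\in g[U]\cap g[V]$ is the desired contradiction. The main subtlety is that a naive ``doubled-point-at-infinity'' gluing of $\hat L$ fails to be Hausdorff precisely when $L$ is non-compact, so the disjoint union is the right compact-Hausdorff witness.

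For UOK$\Rightarrow$UCR, given a continuous transfinite sequence $f:\alpha\to X$ with distinct limits $x_1\neq x_2$, I would take $L=\alpha$: as a limit ordinal it is non-compact, and every ordinal is locally compact Hausdorff. Its one-point compactification is canonically $\alpha+1$, and the two limits furnish two continuous extensions $\hat f_i:\alpha+1\to X$ sending $\alpha\mapsto x_i$, where continuity at $\alpha$ is exactly the convergence of $f$ to $x_i$; this contradicts UOK. Finally, UCR$\Rightarrow$US is immediate: the ordinal $\omega$ is discrete, so every ordinary sequence is automatically a continuous transfinite sequence and thus falls directly under UCR.
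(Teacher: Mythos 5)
Your proposal is correct and follows essentially the same route as the paper: for $k_2$H $\Rightarrow$ UOK the paper likewise pastes the two extensions into a single continuous map on $\hat L\times\{0,1\}$ (the disjoint union of two copies of $\hat L$) and uses non-compactness of $L$ to see that no neighborhoods of the two points at infinity have disjoint images; the other two implications are handled identically (viewing a limit ordinal as a non-compact locally compact Hausdorff space with one-point compactification $\alpha+1$, and noting $\omega$ is discrete). Your added remarks---reading the first implication as assuming local compactness, and observing that the naive doubled-endpoint gluing would not be Hausdorff---are sound clarifications of details the paper leaves implicit.
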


\begin{proof}
Given a $k_2$H space, take non-compact locally compact Hausdorff $L$,
$f:L\to X$ continuous, and points $x_0,x_1$ such that extending $f$
to $\hat f:\hat L\to X$ by either $\hat f(\infty)=x_0$
or $\hat f(\infty)=x_1$ is continuous.
Define $g:K\times\{0,1\}\to X$ by $g(l,i)=f(l)$ and $g(\infty,i)=x_i$. Then $g$ is continuous, and there are no neighborhoods of
$\tuple{\infty,0},\tuple{\infty,1}$ whose images are disjoint, so $g(\infty,0)=g(\infty,1)$, that is, $x_0=x_1$ by $k_2$H.

To see UOK $\Rightarrow$ UCR, note that every limit ordinal is
non-compact locally compact Hausdorff, so given a transfinite sequence,
apply UK to obtain the unique
compactifying limit, which is the unique radial limit.

Finally, UCR $\Rightarrow$ US is immediate as every sequence is
continuous (since its domain $\omega$ is discrete).
\end{proof}

Are these distinct? We defer a witness that $\text{UOK} \not\Rightarrow k_2\text{H}$
to the next section (Example \ref{URnotk2H}),
but let's now demonstrate counterexamples to the latter
two implications.

\begin{example}
    As noted earlier, $\omega_1+1$ with its endpoint doubled \piBaseS{37}
    is an example of a US space that is not UCR: each of the endpoints
    is a limit of an $\omega_1$-length continuous transfinite sequence.
\end{example}

We require a little housekeeping to witness UCR $\not\Rightarrow$ UOK.

\begin{definition}
    A space is \term{sequentially discrete} \piBaseP{167} provided
    that every convergent sequence in the space is eventually constant.
\end{definition}

\begin{lemma}
    Every sequentially discrete space is ``C-radially discrete'':
    every convergent continuous transfinite sequence is eventually constant.
\end{lemma}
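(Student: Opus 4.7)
The plan is to proceed by transfinite induction on the limit-ordinal length $\alpha$ of the sequence, splitting the inductive step on $\operatorname{cof}(\alpha)$. The base case $\alpha = \omega$ is immediate, since an $\omega$-length continuous transfinite sequence is simply an ordinary convergent $\omega$-sequence, to which sequential discreteness applies directly. For the inductive step with $\alpha > \omega$, I would fix a continuous $f : \alpha \to X$ converging to some $p$ and assume for contradiction that $f$ is not eventually constant, i.e.\ that $f$ is non-constant on every tail of $\alpha$.

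If $\operatorname{cof}(\alpha) = \omega$, I would fix a cofinal $\omega$-sequence $(\gamma_n)$ in $\alpha$ and construct $\beta_0 < \beta_1 < \cdots$ recursively by choosing each $\beta_{n+1} > \max(\beta_n, \gamma_n)$ with $f(\beta_{n+1}) \neq f(\beta_n)$; such a $\beta_{n+1}$ exists because $f$ takes at least two distinct values on every tail of $\alpha$. The resulting $\omega$-sequence $(f(\beta_n))$ converges to $p$, since $(\beta_n)$ is cofinal in $\alpha$ and $f \to p$, yet no two consecutive terms agree, so $(f(\beta_n))$ is not eventually constant, contradicting sequential discreteness.

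If $\operatorname{cof}(\alpha) = \kappa \geq \omega_1$, then for every limit $\beta < \alpha$ the restriction $f \upharpoonright \beta$ is a continuous transfinite sequence converging to $f(\beta)$, so by the inductive hypothesis there exist $\tau_\beta < \beta$ and $v_\beta \in X$ with $f \equiv v_\beta$ on $[\tau_\beta, \beta)$. Fix a club $C = \{c_\xi : \xi < \kappa\}$ in $\alpha$ consisting of limit ordinals, and on the limit points of $\kappa$ define $\tilde\tau(\xi) = \min\{\eta < \xi : c_\eta > \tau_{c_\xi}\}$; this is well-defined and regressive, since closedness of $C$ gives $c_\xi = \sup_{\eta < \xi} c_\eta$ at each limit $\xi$. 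Fodor's lemma then produces a stationary $S \subseteq \kappa$ and an $\eta^* < \kappa$ with $\tilde\tau \equiv \eta^*$ on $S$. For $\xi \in S$ we have $c_{\eta^*} \in [\tau_{c_\xi}, c_\xi)$, which forces $v_{c_\xi} = f(c_{\eta^*})$; calling this common value $v^*$, we get $f \equiv v^*$ on every $[c_{\eta^*}, c_\xi)$ with $\xi \in S$, and since $\{c_\xi : \xi \in S\}$ is cofinal in $\alpha$ the union of these intervals is $[c_{\eta^*}, \alpha)$, so $f \equiv v^*$ on a tail of $\alpha$, contradicting our assumption.

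The main obstacle is the uncountable-cofinality case: for countable cofinality one can directly convert a failure of eventual constancy into an ordinary $\omega$-sequence witnessing a failure of sequential discreteness, but when $\operatorname{cof}(\alpha) > \omega$ no cofinal $\omega$-sequence is available, so the proof must instead combine the inductive eventual-constancy on every proper initial segment with a pressing-down argument to isolate a single terminal value on which $f$ is constant.
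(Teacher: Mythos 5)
Your proof is correct, but it takes a much longer road than the paper's. The paper's argument is a one-liner: since a sequentially discrete space is $T_1$, a convergent continuous transfinite sequence that is not eventually constant must have infinite range, so one extracts an increasing sequence of indices $\alpha_0<\alpha_1<\cdots$ on which $f$ is injective (or merely non-constant between consecutive terms), and then \emph{continuity of $f$ at $\sup_n\alpha_n$} makes $(f(\alpha_n))$ a convergent, non--eventually-constant $\omega$-sequence, contradicting sequential discreteness. The observation you miss is that the chosen indices need not be cofinal in the domain $\alpha$: if $\sup_n\alpha_n<\alpha$, the image sequence still converges, namely to $f(\sup_n\alpha_n)$. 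With that in hand, your entire uncountable-cofinality case --- the transfinite induction, the club of limit ordinals, and the pressing-down argument --- evaporates, since the countable-cofinality construction you give already works verbatim at any internal limit point of countable cofinality. That said, everything you wrote checks out: the recursive choice of $\beta_{n+1}$ is justified because $f$ is non-constant on every tail, the Fodor argument correctly stabilizes the eventual value $v_{c_\xi}=f(c_{\eta^*})$ on a stationary set, and the union of the intervals $[c_{\eta^*},c_\xi)$ is indeed a tail of $\alpha$, yielding the desired contradiction. One small bonus of your route: it never needs the (true, but worth verifying) fact that sequentially discrete implies $T_1$, which the paper's infinite-range step relies on. The cost is that you invoke Fodor's lemma and a transfinite induction for a statement that follows from continuity at a single countable limit.
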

\begin{proof}
    Consider a convergent continuous transfinite sequence $f$; suppose it
    is not eventually constant. Every sequentially discrete space is
    $T_1$, so $f$ has infinite range. So we may obtain an $\omega$-length
    injective subsequence $f(\alpha_n)$ for $n<\omega$. But by continuity,
    $f(\alpha_n)\to f(\sup\alpha_n)$; therefore the space is not sequentially
    discrete.
\end{proof}

\begin{corollary}
    Every sequentially discrete space is UCR.
\end{corollary}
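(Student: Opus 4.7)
The plan is to deduce this directly from the preceding lemma together with the $T_1$ property of sequentially discrete spaces. Given a sequentially discrete space $X$ and a convergent continuous transfinite sequence $f:\alpha\to X$, the lemma yields some $\beta<\alpha$ and $x_0\in X$ with $f(\gamma)=x_0$ for all $\gamma\geq\beta$. Such an eventually constant sequence plainly has $x_0$ as a limit, so the task reduces to ruling out any other candidate limit.

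For uniqueness, I would invoke $T_1$ (which holds for sequentially discrete spaces, as used in the lemma's proof): for any $y\neq x_0$, the set $X\setminus\{x_0\}$ is an open neighborhood of $y$ that is disjoint from the eventual tail $f[\alpha\setminus\beta]=\{x_0\}$, so $y$ cannot be a limit of $f$. Hence $x_0$ is the unique limit, which is exactly UCR.

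There is no real obstacle here; the only subtlety worth flagging is that one needs $T_1$ to conclude uniqueness (eventual constancy alone does not prevent other limits in a non-$T_1$ space), but this is immediate from the definition of sequentially discrete since any two-point indiscrete-style configuration would admit a non-eventually-constant convergent sequence. The proof should therefore be a short two-sentence argument chaining the lemma with the $T_1$ observation.
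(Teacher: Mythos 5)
Your argument is correct and matches the paper's proof, which likewise combines the preceding lemma (convergent continuous transfinite sequences are eventually constant) with the observation that eventually constant transfinite sequences have unique limits in $T_1$ spaces. You simply spell out the $T_1$ uniqueness step via the open set $X\setminus\{x_0\}$, which the paper leaves implicit.
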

\begin{proof}
    All convergent continuous transfinite sequences are eventually constant,
    and eventually constant transfinite sequences have unique limits in $T_1$
    spaces.
\end{proof}

\begin{lemma}
    Let $x$ be a non-isolated point of a $T_3$ space.
    Then $X\setminus\{x\}$ is not compact.
\end{lemma}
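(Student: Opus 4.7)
The plan is a direct proof by contradiction using the standard characterization of regularity. Suppose $X\setminus\{x\}$ is compact. Because the space is $T_3$ (hence $T_1$), the singleton $\{x\}$ is closed, so $X\setminus\{x\}$ is open, and for each point $y\neq x$ we may apply regularity to the point $y$ and the open set $X\setminus\{x\}$ to obtain an open neighborhood $U_y$ of $y$ with $y\in U_y\subseteq\overline{U_y}\subseteq X\setminus\{x\}$; equivalently, $x\notin\overline{U_y}$.

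The collection $\{U_y:y\in X\setminus\{x\}\}$ is an open cover of $X\setminus\{x\}$, so by the standing compactness assumption it admits a finite subcover $U_{y_1},\dots,U_{y_n}$. Then
\[X\setminus\{x\}\subseteq \overline{U_{y_1}}\cup\dots\cup\overline{U_{y_n}},\]
and since the right-hand side is closed, taking the closure on the left gives $\overline{X\setminus\{x\}}\subseteq \overline{U_{y_1}}\cup\dots\cup\overline{U_{y_n}}$ as well.

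Now the hypothesis that $x$ is non-isolated means exactly that every neighborhood of $x$ meets $X\setminus\{x\}$, i.e.\ $x\in\overline{X\setminus\{x\}}$. Consequently $x$ must belong to some $\overline{U_{y_i}}$, contradicting the choice of $U_{y_i}$. I expect no real obstacle here; the only subtlety is remembering that $T_3$ supplies the ``shrink inside an open set with closure still inside'' form of regularity, which is precisely what lets us guarantee that each $U_y$ has $x$ outside its closure rather than merely outside $U_y$ itself.
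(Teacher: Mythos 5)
Your proof is correct. It differs from the paper's in where regularity is applied and how the contradiction is extracted. You apply regularity at each point $y\neq x$ (separating $y$ from the closed set $\{x\}$, which requires the $T_1$ part of $T_3$ to know $\{x\}$ is closed) to build a cover of $X\setminus\{x\}$ by open sets whose closures miss $x$; a finite subcover then traps $\overline{X\setminus\{x\}}$, hence $x$ itself by non-isolation, inside a finite union of closures each avoiding $x$. The paper instead applies regularity only at $x$, taking a local base $\mathcal B$ of closed neighborhoods of $x$ and covering $X\setminus\{x\}$ by the complements $X\setminus B$; a finite subcover forces $\bigcap\mathcal B_F=\{x\}$ to be a neighborhood of $x$, directly contradicting non-isolation. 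The two arguments are of essentially equal length and difficulty; yours uses the more familiar ``shrink inside an open set'' formulation of regularity and the closure characterization of non-isolated points, while the paper's localizes all the work at the single point $x$ and reads the contradiction off as ``$\{x\}$ is open.'' Both are valid, and neither generalizes further than the other in any interesting way.
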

\begin{proof}
    Let $\mathcal B$ be a local base at $x$. By regularity, we may
    assume each member of $\mathcal B$ is a closed neighborhood.

    Consider the collection of open sets
    $\mathcal U=\{X\setminus B:B\in\mathcal B\}$.
    Since $X$ is $T_1$, for each $y\in X\setminus\{x\}$ there is some
    $B_y\in\mathcal B$ with $y\not\in B_y$. Thus $\mathcal U$ is
    an open cover of $X\setminus\{x\}$.

    Suppose $\mathcal U$ had a finite subcover
    $\mathcal F=\{X\setminus B:B\in\mathcal B_F\}$ of $X\setminus\{x\}$ for
    $\mathcal B_F\subseteq\mathcal B$ finite. Then
    $\bigcap \mathcal B_F=\{x\}$ would be a neighborhood of $x$, showing
    it is an isolated point.
\end{proof}

\begin{example}
    Let $X$ be a compact, Hausdorff, sequentially discrete space
    with a non-isolated point $x$ (e.g.
    the Stone-Cech compactification $\beta\omega$
    of a countable discrete space \piBaseS{108}).
    
    Let $X'=X\cup\{x'\}$ where $x'$ doubles $x$. Then $X'$ is
    still sequentially discrete, and therefore UCR.
    Note that $L=X\setminus\{x\}$ is non-compact,
    locally compact, and Hausdorff.
    Thus, $L$ with inclusion $\iota:L\to X'$ witnesses
    that $X'$ is not UOK: both $x,x'$ are compactifying points.
\end{example}

\section{Unique Radial convergence}

The UR property sits orthogonal from many of the strengthenings of US
discussed in the introduction, but is still a consequence of $T_2$.

\begin{definition}
    A point $x$ of a space is \term{radially accessible}
    provided there is a non-trivial transfinite sequence that
    converges to it.
\end{definition}

\begin{lemma}
    Every non-isolated point in a Hausdorff compact space is
    radially accessible.
\end{lemma}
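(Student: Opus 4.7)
The plan is to build a transfinite sequence of length $\kappa := \chi(x, X)$ converging to $x$. Since $x$ is non-isolated in a compact Hausdorff (hence $T_1$ and regular) space, $\kappa$ is an infinite cardinal, and we may choose a local base $\{U_\beta : \beta < \kappa\}$ at $x$ consisting of closed neighborhoods. I will then set $F_\alpha = \bigcap_{\beta \leq \alpha} U_\beta$ for each $\alpha < \kappa$, pick $x_\alpha \in F_\alpha \setminus \{x\}$, and verify that $(x_\alpha)_{\alpha < \kappa}$ converges to $x$.

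The key step is showing $F_\alpha \neq \{x\}$ for each $\alpha < \kappa$. Suppose otherwise; then for any open neighborhood $V$ of $x$, the family $\{X \setminus U_\beta : \beta \leq \alpha\} \cup \{V\}$ is an open cover of $X$, so compactness yields a finite subcover, whence $V$ contains some finite intersection of the $U_\beta$'s. Thus these finite intersections form a local base at $x$ of cardinality at most $\max(|\alpha|, \aleph_0) < \kappa$ when $\alpha$ is infinite, contradicting minimality of $\kappa$. When $\alpha$ is finite the induced base is itself finite, which would force $x$ to be isolated in the $T_1$ space $X$, again contradicting the hypothesis.

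With each $x_\alpha \in F_\alpha \setminus \{x\}$, convergence follows immediately: any neighborhood $U$ of $x$ contains some $U_{\beta_0}$, and for every $\alpha \geq \beta_0$, $x_\alpha \in F_\alpha \subseteq U_{\beta_0} \subseteq U$. Since $\kappa$ is an infinite cardinal it is a limit ordinal, so $(x_\alpha)_{\alpha < \kappa}$ has the required domain, and since each $x_\alpha \neq x$ it is non-trivial.

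The main obstacle is the cardinality bookkeeping in the key step, especially the need to accommodate the possibility that $\kappa$ is singular and to handle the finite-$\alpha$ case separately. Working with a base of closed neighborhoods, enabled by the regularity coming from compact Hausdorff, is essential so that the complements $X \setminus U_\beta$ are open and produce a legitimate open cover when combined with $V$.
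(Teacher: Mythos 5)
Your proof is correct, but it runs along a genuinely different track from the paper's. The paper makes no mention of the character of the point: it performs a transfinite recursion, shrinking compact neighborhoods $K'_{\alpha+1}\subsetneq K'_\alpha$ of $x$ and intersecting at limits until $x$ first becomes isolated (necessarily at a limit stage), and then invokes compactness via the finite intersection property to show that every neighborhood of $x$ absorbs a tail of the resulting chain --- that is, compactness is what delivers \emph{convergence}, while the non-triviality of the chosen points is built into the recursion. You instead fix the length of the sequence in advance as $\kappa=\chi(x,X)$, well-order a closed-neighborhood base, and form the partial intersections $F_\alpha$; for you convergence is automatic (each $F_\alpha$ sits inside $U_{\beta_0}$ for $\alpha\geq\beta_0$), and compactness is used for the opposite purpose, namely to show $F_\alpha\neq\{x\}$ via the finite-subcover/minimality-of-$\kappa$ argument, so that the points $x_\alpha\neq x$ exist at all. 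Your cardinality bookkeeping is sound: for infinite $\alpha<\kappa$ the finite intersections give a local base of size $|\alpha|<\kappa$, and the finite-$\alpha$ case correctly reduces to $x$ being isolated; note also that you never need $\kappa$ to be regular, so the worry about singular $\kappa$ is moot. What your approach buys is the extra information that the converging sequence can be taken of length exactly $\chi(x,X)$; what the paper's buys is independence from cardinal functions and a construction that works directly with the lattice of compact neighborhoods. Either proof is acceptable for the lemma as stated.
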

\begin{proof}
    Given a non-isolated point $x$ in a Hausdorff compact space $K$,
    let $K_0'=K$. If $K_\alpha'\subseteq K$ is defined for some ordinal
    $\alpha$ such that $x$ is not isolated in the subspace, let
    $K_{\alpha+1}'$ be a compact neighborhood of $x$ in $K_\alpha'$
    such that $K_{\alpha+1}'\subsetneq K_\alpha'$, and note that
    $x$ is not isolated in $K_{\alpha+1}'$. Then if $K_\beta'$ is defined
    for all $\beta<\alpha$ where $\alpha$ is a limit ordinal, then
    let $K_\alpha'=\bigcap_{\beta<\alpha}K_\beta'$.

    Then we have some limit ordinal $\alpha$ such that $\{K_\beta':\beta<\alpha\}$
    is a strictly decreasing chain of compact sets with $x$ non-isolated
    in each. Since $x$ is isolated in $K_\alpha'$, choose a compact neighborhood
    $N$ of $x$ in $K$ such that $K_\alpha'\cap N=\{x\}$, then let
    $K_\beta=K_\beta'\cap N$ for $\beta\leq\alpha$. It follows that
    $x$ is not isolated in each $K_\beta$ for $\beta<\alpha$, and
    $K_\alpha=\{x\}$.

    Then we note that for each neighborhood $U$ of $x$, $U$ contains some
    $K_\beta$; if not then $\{K_\beta\setminus U:\beta<\alpha\}$ has the
    finite intersection property, but its intersection $\{x\}\setminus U$
    is empty, a contradiction of the compactness of $K$. Thus we may pick
    $x_\beta\in K_\beta$ to obtain a transfinite sequence converging to $x$.
\end{proof}

\begin{theorem}
    $T_2 \Rightarrow$ UR $\Rightarrow$ UOK.
\end{theorem}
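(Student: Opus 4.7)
The plan is to handle the two implications separately, each by a short argument.

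For $T_2 \Rightarrow$ UR, the standard Hausdorff-implies-unique-limit argument transfers verbatim to transfinite sequences. Given a transfinite sequence $f:\alpha\to X$ with two distinct limits $x,y$, pick disjoint open neighborhoods $U\ni x$ and $V\ni y$. By definition of limit, there exist $\beta_x,\beta_y<\alpha$ such that $f(\gamma)\in U$ for all $\gamma\geq\beta_x$ and $f(\gamma)\in V$ for all $\gamma\geq\beta_y$. Taking $\gamma=\max(\beta_x,\beta_y)$ places $f(\gamma)$ in the empty set $U\cap V$, a contradiction.

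For UR $\Rightarrow$ UOK, suppose $X$ is UR (and locally compact, as required by the definition of UOK), and take witnesses to a failure of UOK: a non-compact locally compact Hausdorff space $L$, a continuous $f:L\to X$, and two continuous extensions $\hat f_0,\hat f_1:\hat L\to X$ with $\hat f_0(\infty)=x_0\neq x_1=\hat f_1(\infty)$. Since $L$ is non-compact, $\infty$ is a non-isolated point of the compact Hausdorff space $\hat L$. Applying the preceding lemma, there is a transfinite sequence $(p_\beta)_{\beta<\alpha}$ in $\hat L$ converging to $\infty$; by choosing each $p_\beta$ from the nonempty set $K_\beta\setminus\{\infty\}$ in the lemma's construction (possible because $\infty$ is non-isolated in $K_\beta$), we may assume $p_\beta\in L$ for every $\beta$. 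Then $f(p_\beta)=\hat f_i(p_\beta)$, and continuity of $\hat f_i$ at $\infty$ gives $f(p_\beta)\to x_i$ for $i=0,1$, so the transfinite sequence $f\circ p$ has two distinct limits, contradicting UR.

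The only mildly delicate point is the last one: ensuring the radially-accessing sequence delivered by the previous lemma avoids the point $\infty$ itself, so that the composition with $f$ is well-defined and the continuity of each $\hat f_i$ actually pins down a limit of $f(p_\beta)$ in $X$. This is resolved by the parenthetical modification of the lemma's construction noted above.
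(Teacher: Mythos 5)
Your proof is correct and follows essentially the same route as the paper: the standard disjoint-neighborhoods argument for $T_2\Rightarrow$ UR, and for UR $\Rightarrow$ UOK an appeal to the preceding lemma to radially access $\infty$ in $\hat L$ and push the resulting transfinite sequence forward through the two extensions. Your parenthetical care in choosing the sequence inside $L$ (so that $\hat f_0$ and $\hat f_1$ genuinely agree along it) is a detail the paper's version glosses over, and it is a worthwhile addition.
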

\begin{proof}
    Given a $T_2$ space,
    suppose $x$ is the limit of a transfinite sequence $f$, and $y\not=x$.
    Let $U,V$ be disjoint open neighborhoods of $x,y$: $U$ contains a final
    subsequence of $f$, so $V$ cannot, and thus $x$ is the unique limit of $f$,
    proving UR.

    Let $X$ be UR, and let $f:L\to X$ be continuous for some non-compact
    locally compact Hausdorff $L$. Let $\hat f_0,\hat f_1:\hat L\to X$
    be continuous extensions of $f$. Note $\infty\in\hat L$ is radially
    accessible, so pick $l_\beta\to\infty$ for $\beta<\alpha$. It follows
    that $\hat f_0(l_\beta)=\hat f_1(l_\beta)$ is a transfinite sequence 
    in UR $X$, so there it has a unique limit which by continuity must
    equal $\hat f_0(\infty)=\hat f_1(\infty)$, showing $X$ is UOK.
\end{proof}

As a result, we now have the following spectrum of properties
among US:

\begin{center}
\begin{tikzcd}
T_2 \arrow[d, Rightarrow] \arrow[rrrr, Rightarrow] &                                 &                                 &                                   & \text{UR} \arrow[d, Rightarrow] &                                  &                                 &     \\
k_1\text{H} \arrow[r, Rightarrow]                  & \text{KC} \arrow[r, Rightarrow] & \text{wH} \arrow[r, Rightarrow] & k_2\text{H} \arrow[r, Rightarrow] & \text{UOK} \arrow[r, Rightarrow] & \text{UCR} \arrow[r, Rightarrow] & \text{US} \arrow[r, Rightarrow] & T_1
\end{tikzcd}
\end{center}

Examples of spaces that are UR but not $k_2$H, and $k_1$H but
not UR, are owed to the reader.

\begin{example}
    The cocountable topology on an uncountable set, e.g. \piBaseS{17},
    is \term{anticompact} \piBaseP{136},
    that is, compact sets must be finite.
    This follows as every infinite set contains a countably infinite
    set, which is closed and discrete. Every anticompact $T_1$ space
    is $k_1$H, since every finite $T_1$ space is $T_2$.
    But every non-trivial transfinite sequence of uncountable length
    converges to every point of the space, and thus the space is not
    UR.
\end{example}

\begin{example}\label{URnotk2H}
    Let $Y$ be a compact and Hausdorff space that is not
    pseudoradial (e.g., again,
    the Stone-Cech compactification $\beta\omega$
    of a countable discrete space \piBaseS{108}).
    Let $A$ be a radially closed set which is not closed
    and $B=Y\setminus A$
    (in $\beta\omega$, $A=\omega$ and $B=\beta\omega\setminus\omega$),
    and let $X=Y\cup B'$, where $B'$ doubles each point of $B$ (so
    $A\cup B$ and $A\cup B'$ are open copies of $Y$ as subspaces of
    $X$).
    Call each pair of corresponding points $b,b'$ from $B,B'$ ``twins'',
    and let $\theta:X\to Y$ send each point $b'\in B'$ to its twin
    $b\in B$, and be the identity elsewhere.

    We claim $X$ is UR (and thus UOK). If $\theta(x_0)\not=\theta(x_1)$, then
    the disjoint neighborhoods of $\theta(x_0),\theta(x_1)$ in Hausdorff
    $Y$ witness disjoint neighborhoods of $x_0,x_1$ in $X$, showing
    no transfinite sequence has both as its limit. If
    $\theta(x_0)=\theta(x_1)$ for $x_0\not=x_1$,
    assume $x_0\in B,x_1\in B'$. Suppose $f:\alpha\to X$ converges
    to $x_0$. Then as $A$ is radially closed, a final tail of $f$
    must lay within $B$ in order for $f\to x_0$. It follows that
    $X\setminus B$ is a neighborhood of $x_1$ that does not contain
    a tail of $f$, so $f\not\to x_1$. Similarly, no transfinite sequence
    converging to $x_1$ may converge to $x_0$.

    Finally, $X$ may be seen to fail $k_2$H by considering the map
    $h:Y\times\{0,1\}\to X$ sending $h(a,i)=a$ for $a\in A$,
    and $h(b,0)=b\in B,h(b,1)=b'\in B'$ for $b\in B$. Since $A$
    is not closed, choose $l\in\operatorname{cl}(A)\setminus A$.
    Then $\tuple{l,0},\tuple{l,1}$ are points of $Y\times\{0,1\}$
    with $h(l,0)=l\not=l'=h(l,1)$, but for every pair of neighborhoods
    $U,U'$ of $\tuple{l,0},\tuple{l,1}$,
    we have $h[U]\cap h[U']\cap A\not=\emptyset$.
\end{example}

\section{Other weakenings of $T_2$}

Combined with the results of \cite{2312.08328}, we have the
following.

\begin{definition}
    A space is \term{locally Hausdorff} (lH for short, \piBaseP{84}) 
    provided each point has a Hausdorff neighborhood.
\end{definition}

\begin{definition}
    A space is \term{semi-Hausdorff} (sH for short, \piBaseP{169})
    provided for each pair of distinct points $x,y$, there exists
    a regular open neighborhood of $x$ missing $y$, where a regular
    open neighborhood equals the interior of its closure.
\end{definition}

\begin{definition}
    A space \term{has closed retracts} (RC for short, \piBaseP{101})
    provided every retract subspace ($A\subseteq X$ with a continuous 
    $f:X\to A$ fixing each point of $A$) is closed.
\end{definition}

\begin{theorem}\label{all-props}
\begin{center}
\begin{tikzcd}
T_2 \arrow[ddd, Rightarrow] \arrow[rrrrr, Rightarrow] \arrow[rrrrrd, Rightarrow] \arrow[rrrrrdd, Rightarrow] \arrow[rrdd, Rightarrow] &                                 &                                   &                                   &                                 & \text{lH} \arrow[rrddd, Rightarrow] &                                 &     \\
                                                                                                                                      &                                 &                                   &                                   &                                 & \text{sH} \arrow[rrdd, Rightarrow]  &                                 &     \\
                                                                                                                                      &                                 & \text{UR} \arrow[rrd, Rightarrow] &                                   &                                 & \text{RC} \arrow[rrd, Rightarrow]   &                                 &     \\
k_1\text{H} \arrow[r, Rightarrow]                                                                                                     & \text{KC} \arrow[r, Rightarrow] & \text{wH} \arrow[r, Rightarrow]   & k_2\text{H} \arrow[r, Rightarrow] & \text{UOK} \arrow[r, Rightarrow] & \text{UCR} \arrow[r, Rightarrow]    & \text{US} \arrow[r, Rightarrow] & T_1
\end{tikzcd}
\end{center}
\end{theorem}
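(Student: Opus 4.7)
The plan is to verify the diagram arrow-by-arrow, splitting the arrows into those already established earlier in this paper and those imported from the cited literature. First, the ``spine'' $T_2 \Rightarrow \text{UR} \Rightarrow \text{UOK} \Rightarrow \text{UCR} \Rightarrow \text{US} \Rightarrow T_1$ together with the additional arrow $k_2\text{H} \Rightarrow \text{UOK}$ is precisely the content of the theorems of the preceding two sections, so requires no further argument here.

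Next, I would invoke \cite{2312.08328} to supply the leftmost chain $T_2 \Rightarrow k_1\text{H} \Rightarrow \text{KC} \Rightarrow \text{wH} \Rightarrow k_2\text{H}$, which is the central survey diagram of that paper, together with the auxiliary branches showing that each of lH, sH, and RC lies between $T_2$ and US. This single citation accounts for essentially all of the remaining arrows of the diagram.

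For the sake of self-containment, I would briefly re-derive the three arrows emanating from $T_2$ into the lH/sH/RC branch: $T_2 \Rightarrow \text{lH}$ is immediate since the whole space serves as a Hausdorff neighborhood of each point; $T_2 \Rightarrow \text{sH}$ follows by taking the regular open interior of the closure of a small open neighborhood separating two distinct points (the fact that closures of disjoint opens in a Hausdorff space still separate the points handles missing $y$); and $T_2 \Rightarrow \text{RC}$ follows because a retract $A \subseteq X$ coincides with the equalizer $\{x \in X : r(x) = x\}$ of the retraction $r$ with the identity map, and such an equalizer is closed whenever the codomain is Hausdorff.

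The ``hard part'' here is not mathematical but organizational: confirming that every arrow of the diagram has been accounted for by a prior theorem of this paper or by \cite{2312.08328}, and flagging any implication (for instance, the placement of UR and UOK relative to the orthogonal lH/sH/RC branch) that would require an independent argument. I expect none to remain, so the main obstacle reduces to presenting the citations and the routine verifications above in a tidy, readable form.
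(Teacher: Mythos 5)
Your overall strategy matches the paper's, which in fact offers no explicit proof at all: the theorem is presented as a summary diagram assembled from the theorems of the preceding two sections together with the survey \cite{2312.08328}, exactly as you propose. The spine $T_2\Rightarrow\text{UR}\Rightarrow\text{UOK}\Rightarrow\text{UCR}\Rightarrow\text{US}\Rightarrow T_1$, the arrow $k_2\text{H}\Rightarrow\text{UOK}$, and the chain $T_2\Rightarrow k_1\text{H}\Rightarrow\text{KC}\Rightarrow\text{wH}\Rightarrow k_2\text{H}$ are all accounted for as you say, and your quick derivations of $T_2\Rightarrow\text{lH}$, $T_2\Rightarrow\text{sH}$, and $T_2\Rightarrow\text{RC}$ are correct.

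However, you have misread where the lH, sH, and RC branches terminate. In the diagram each of $\text{lH}$, $\text{sH}$, and $\text{RC}$ points to $T_1$, not to $\text{US}$: the arrows to be verified are $\text{lH}\Rightarrow T_1$, $\text{sH}\Rightarrow T_1$, and $\text{RC}\Rightarrow T_1$. Your plan to have \cite{2312.08328} ``supply the auxiliary branches showing that each of lH, sH, and RC lies between $T_2$ and US'' would fail, because none of these three properties implies US; the paper states this explicitly immediately after the theorem (``Examples of spaces which are (respectively) lH, sH, and RC, but not US (and thus not UR) are already listed in $\pi$-Base''), and, for instance, the line with two origins is locally Hausdorff while a single sequence converges to both origins. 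The implications actually required are only the much weaker (and true) statements $\text{lH},\text{sH},\text{RC}\Rightarrow T_1$, which do come from \cite{2312.08328} or from one-line arguments (e.g., singletons are retracts, so RC spaces are $T_1$). Once you replace the false targets with $T_1$, your arrow-by-arrow audit goes through and coincides with the paper's implicit argument.
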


Examples of spaces which are (respectively) lH, sH, and RC, but not
US (and thus not UR) are already listed in $\pi$-Base. But does UR imply
any of lH, sH, or RC?
Reader, it does not.

\begin{example}
    Example \ref{URnotk2H} was shown to be UR, but it is not RC:
    the map $\theta: X\to Y$ is continuous, but $Y$ is not closed
    in $X$.
\end{example}

\begin{example}
    Consider the set $X=\omega$ with the topology given by
    $\mathcal F\cup\{\emptyset\}$ for some free ultrafilter on $\omega$
    \piBaseS{145}. Since the space is countable,
    UR is equivalent to US; the space is US (in fact, $k_1$H)
    because it is $T_1$ and anticompact.

    However, the space is not sH because it is \term{hyperconnected}
    \piBaseP{39}: every nonempty open set is dense. Therefore
    the only regular open set in $X$ is the entire space.
    Neither is it locally Hausdorff: every open subspace is
    still hyperconnected, and therefore not Hausdroff.
\end{example}

\begin{theorem}
    The figure given as Theorem \ref{all-props} is complete:
    each pair of properties not connected by arrows is witnessed
    by a known counterexample (given either here or in
    \cite{2312.08328}).
\end{theorem}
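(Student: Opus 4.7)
The plan is to verify, case by case, that every pair of properties in Theorem \ref{all-props} lacking a connecting directed path of arrows is separated by a known counterexample. The pairs that do not involve UR are essentially handled by the completeness analysis of the diagram in \cite{2312.08328} together with two witnesses supplied in the previous section: the doubled $\beta\omega$ example preceding Example \ref{URnotk2H} is UCR but not UOK, and \piBaseS{37} is US but not UCR. Observe that inserting UOK into the chain between $k_2$H and UCR creates no new pair requiring fresh verification, since the counterexamples from \cite{2312.08328} distinguishing $k_2$H and UCR from lH, sH, RC, and each other also serve to distinguish UOK from those properties.

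The genuinely new pairs are those involving UR, and they split into two groups. For non-implications originating at UR, Example \ref{URnotk2H} is shown to be UR yet fails both $k_2$H and RC, which via transitivity in the diagram handles UR $\not\Rightarrow k_2$H, wH, KC, $k_1$H, $T_2$, and UR $\not\Rightarrow$ RC. For UR $\not\Rightarrow$ sH and UR $\not\Rightarrow$ lH, the free ultrafilter space \piBaseS{145} suffices: it is countable, $T_1$, and US, and on a countable $T_1$ space US is equivalent to UR (at uncountable cofinality some value is hit cofinally, forcing uniqueness by $T_1$; at cofinality $\omega$, a cofinal $\omega$-subsequence reduces uniqueness to the US hypothesis), but the space is hyperconnected and therefore fails both sH (the whole space is the only regular open set) and lH (every nonempty open subspace is hyperconnected and so non-Hausdorff).

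For non-implications terminating at UR, the cocountable topology on an uncountable set \piBaseS{17} is anticompact $T_1$ and hence $k_1$H, so it witnesses each of $k_1$H, KC, wH, $k_2$H, UOK, UCR, and US failing to imply UR; any injective transfinite sequence of uncountable length into this space converges to every point. For lH $\not\Rightarrow$ UR, sH $\not\Rightarrow$ UR, and RC $\not\Rightarrow$ UR, it suffices to note that UR $\Rightarrow$ US and invoke the spaces from \cite{2312.08328} which have the source property but fail US.

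The only real obstacle is organizational bookkeeping: enumerating the incomparable pairs, checking that the cited witness in each case truly has the claimed combination of properties, and confirming that transitivity collapses the list to the handful treated above. No essentially new constructions are needed beyond those already presented in this paper and \cite{2312.08328}.
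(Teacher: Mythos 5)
Your proposal is correct and matches the paper's (implicit) argument: the paper states this theorem without a formal proof, relying on exactly the witnesses you assemble --- Example \ref{URnotk2H} for UR versus $k_2$H and RC, the ultrafilter space \piBaseS{145} for UR versus lH and sH, the cocountable space \piBaseS{17} for the lower chain versus UR, the doubled sequentially discrete compactum and \piBaseS{37} for UCR/UOK/US, and the survey \cite{2312.08328} (together with $\pi$-Base) for the remaining pairs. Your added justification that US and UR coincide on countable $T_1$ spaces is consistent with the argument of Theorem \ref{ur-inj} and fills in a step the paper leaves to the reader.
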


\section{Spaces generated by maps}

We will conclude this investigation
by observing that there exists a general framework that can be used
to characterize many of these separation axioms.  

The aforementioned framework will be developed in the next section.  In preparation for this,  we review here some facts on topologies generated by maps from certain classes of spaces. 


\begin{definition}
    Let $\mathbf C$ be a class of topological spaces. Given a topological
    space $X$, let $X_{\mathbf C}$ have the (possibly) finer topology
    generated by continuous maps from spaces in $\mathbf C$ into $X$;
    i.e., a set $U$ is open in $X_{\mathbf C}$ whenever $f^\leftarrow[U]$
    is open in $Z$ for all $Z\in\mathbf C$ and continuous $f:Z\to X$.
    We say a subset of $X$ is \term{$\mathbf C$-closed}
    (resp. $\mathbf C$-open) provided it is closed (resp. open)
    in $X_{\mathbf C}$.
\end{definition}
\newcommand{\gen}[1]{#1_{\text{Gen}}}

\begin{definition}
    Let $\mathbf C$ be a class of topological spaces.
    We say a space $X$ is \term{$\mathbf C$-generated} if every
    $\mathbf C$-closed set is closed (equivalently, every
    $\mathbf C$-open set is open), that is, the topology on
    $X_{\mathbf C}$ coincides with its original topology.  We denote by $\gen{\mathbf{C}}$ the class of $C$-generated spaces.
\end{definition}

\begin{remark}
\label{basic-stuff}
    It is easy to check that $\mathbf C\subseteq \gen{\mathbf C}$, and that if $f\colon X\to Y$ is continuous, then the induced map $f_{\mathbf C}\colon X_{\mathbf C}\to Y_{\mathbf C}$ is continuous as well.
\end{remark}

\begin{remark}
    It is easy to verify from the definition that for any nonempty class $\mathbf C$ of topological spaces, $\gen{\mathbf C}$ is closed under homeomorphic equivalence, the formation of topological quotients, and the formation of topological coproducts (i.e., disjoint unions).  It follows from Theorem A of \cite{Kennison} that  $\gen{\mathbf C}$ is co-reflective, as defined there and elsewhere in the literature.

    Moreover, there is idempotence in that $\gen{(\gen{\mathbf C})}=\gen{\mathbf C}$.  From this, it is easy to see that every co-reflective sub-category has the form $\gen{\mathbf C}$ for some nonempty class of spaces $\mathbf C$.

We will not discuss the general theory of co-reflective subcategories here, but refer the interested reader to \cite{Herrlich} \cite{HerrlichStrecker} and \cite{Kennison}, and the references therein. 

\end{remark}

We establish some examples here for the reader's convenience, directing
the reader to \cite{HerrlichStrecker} (see, e.g., the chart on page 212)
where many of these were also considered.

\begin{definition}
    We define the following useful classes of topological spaces.
    \begin{itemize}
    \item $\mathbf{P}$ is the class consisting of a two-point indiscrete space
    $P=\{0,1\}$.
        \item $\mathbf A$ is the class containing only the
        two-point Sierpinski space $A=\{0,1\}$ with topology
        $\{\emptyset,\{0\},\{0,1\}\}$.
        \item $\mathbf{S}$ is the class containing only
        the converging sequence $\omega+1$.
        \item $\mathbf{CR}$ is the class of all successors $\kappa+1$
        of regular cardinals with their order topologies.
        \item $\mathbf R$ is the class of all
        successors $\kappa+1$ of
        regular cardinals, where each point $\alpha<\kappa$ is isolated
        and $\kappa$ itself has its neighborhoods from the order topology.
        \item $\mathbf{K_1}$ is the class of all compact
        spaces.
        \item $\mathbf{K_2}$ is the class of all compact Hausdorff
        spaces.
        \item $\mathbf H$ is the class of all Hausdorff spaces.
    \end{itemize}
\end{definition}

In particular, the notion of \term{compactly-generated} appears
in the literature in both the sense of $\mathbf K_1$-generated
(e.g. \cite{zbMATH02107988}; we will call such examples
\term{$k_1$-spaces} \piBaseP{140}) and $\mathbf K_2$-generated
(e.g. \cite{MR2273730}; we will call such examples
\term{$k_2$-spaces} \piBaseP{141}). And it is known that what we've called
$k_1$-Hausdorff (resp. $k_2$-Hausdorff) is exactly
$\mathbf K_1$-Hausdorff (resp. $\mathbf K_2$-Hausdorff).

\begin{example}
    A space is \term{topologically partitioned} \piBaseP{185}
    (it has a basis that partitions the space)
    if and only if it is $\mathbf{P}$-generated.
\end{example}
\begin{proof}
    Let $X$ be topologically partitioned, and suppose $U$ is $\mathbf P$-open
    with $x\in U$. Let $V$ be a basic open neighborhood of $x$, and
    $y\in V$. It follows that $x,y$ are topologically indistinguishable,
    and the map $f:P\to X$ defined by $f(0)=x,f(1)=y$ is continuous.
    Since $U$ is $\mathbf P$-open, it follows that $f^\leftarrow[U]=\{0,1\}$,
    and thus $y=f(1)\in U$. This shows $V\subseteq U$, and $U$ is open.

    Let $X$ be $\mathbf P$-generated, and for each $x$ let $[x]$ collect
    all points of $X$ topologically indistinguishable from it. It follows that
    $[x]$ is $\mathbf P$-open, and therefore open. Finally, we conclude that
    $\{[x]:x\in X\}$ is a partition of $X$ forming a basis for its topology.
\end{proof}
\begin{example}
    A space is \term{Alexandrov} \piBaseP{90} (the intersection of open sets is open)
    if and only if it is $\mathbf A$-generated.
\end{example}
\begin{proof}
    Let $X$ be Alexandrov and suppose $U$ is $\mathbf A$-open
    with $x\in U$.
    Suppose there is some point $y\in X\setminus U$ which belongs
    to every neighborhood of $x$. Then the function $f:A\to X$
    defined by $f(0)=y,f(1)=x$ is continuous. But then
    $f^\leftarrow[A]=\{1\}$ is not open, a contradiction. So for
    each $y\in X\setminus U$ there is a neighborhood of $x$
    missing $y$, and the intersection of these neighborhoods is
    a neighborhood of $x$ which is a subset of $U$, proving it is open.

    Let $X$ be $\mathbf A$-generated, and let $\mathcal U$ be a
    collection of open sets. Let $f:A\to X$ be continuous and
    consider $f^\leftarrow\left[\bigcap\mathcal U\right]$. For
    each $U\in\mathcal U$ we have $f^\leftarrow[U]$ open; if
    any is empty, then so is $f^\leftarrow\left[\bigcap\mathcal U\right]$.
    Otherwise, they each contain $0$, and so does
    $f^\leftarrow\left[\bigcap\mathcal U\right]$, making
    $f^\leftarrow\left[\bigcap\mathcal U\right]$ open.
    Therefore $\bigcap\mathcal U$ is $\mathbf A$-open and thus open.
\end{proof}

The following follow as sequentially (resp. C-radially, radially)
closed is equivalent to $\mathbf S$- (resp. $\mathbf{CR}$-, $\mathbf R$-) closed.

\begin{example}
    A space is sequential if and only if it is $\mathbf S$-generated.
\end{example}
\begin{example}
    A space is pseudo-C-radial if and only if it is $\mathbf{CR}$-generated.
\end{example}
\begin{example}
    A space is pseudoradial if and only if it is $\mathbf R$-generated.
\end{example}

We finally see that $\mathbf H$ is an extremal case.

\begin{theorem}\label{extremalH}
    Every topological space is $\mathbf H$-generated.
\end{theorem}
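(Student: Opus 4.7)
The plan is to proceed by contrapositive: if $U \subseteq X$ is not open, I will exhibit a Hausdorff space $Z$ and a continuous map $f \colon Z \to X$ such that $f^{\leftarrow}[U]$ is not open in $Z$, witnessing that $U$ is not $\mathbf H$-open. Since $U$ fails to be open, there is some $x \in U$ with $x \in \overline{X \setminus U}$, so by choosing $s_V \in V \cap (X \setminus U)$ for each open neighborhood $V$ of $x$ one obtains a net in $X \setminus U$ converging to $x$.

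First I would build the Hausdorff ``convergent net'' space directly. Let $\Lambda$ be the neighborhood filter of $x$, directed by reverse inclusion, and set $\Lambda' = \Lambda \times \omega$ with the product order; this is still directed, and crucially it has no maximum element. Let $Z = \Lambda' \cup \{\infty\}$, with the topology that makes each point of $\Lambda'$ isolated and gives $\infty$ the basic open neighborhoods $\{\mu \in \Lambda' : \mu \geq \lambda\} \cup \{\infty\}$, one for each $\lambda \in \Lambda'$. Define $f \colon Z \to X$ by $f(V, n) = s_V$ and $f(\infty) = x$.

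The remaining verifications are routine. Continuity of $f$: an open neighborhood $W$ of $x$ pulls back to contain the tail above $(W, 0)$ by convergence of the net, while open sets missing $x$ pull back into the discrete part $\Lambda'$. Failure of openness: $f^{\leftarrow}[U] = \{\infty\}$, since every $s_V$ lies in $X \setminus U$, and $\{\infty\}$ is not open in $Z$ because every basic neighborhood of $\infty$ meets $\Lambda'$. This contradicts $\mathbf H$-openness of $U$.

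The step I expect to be the main obstacle is arranging for $Z$ to be Hausdorff. Separating $\infty$ from a given point $\lambda_0 \in \Lambda'$ requires a tail neighborhood of $\infty$ not containing $\lambda_0$, hence an index not below $\lambda_0$ in the order. The naive choice of $\Lambda$ alone fails exactly when $x$ has a smallest open neighborhood (so $\Lambda$ has a maximum), with the extreme case of a minimal-neighborhood point collapsing $Z$ to the non-Hausdorff Sierpi\'nski space. Padding with the $\omega$ factor is the uniform fix: $\omega$ has no maximum, hence neither does $\Lambda'$, and the directedness and convergence needed elsewhere are preserved.
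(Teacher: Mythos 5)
Your proof is correct, and it takes a noticeably different route from the paper's. The paper argues with closed sets: given $x\in\overline{H}\setminus H$, it splits into two cases --- if $x\in\overline{\{h\}}$ for a single $h\in H$ it uses the constant-tailed sequence $\omega+1\to X$, and otherwise it takes the subspace $H\cup\{x\}$ with every point of $H$ isolated, which is Hausdorff precisely because the first case fails. Your argument replaces this case split with a single uniform construction: an ``almost discrete'' net space indexed by $\Lambda\times\omega$, where padding by $\omega$ guarantees Hausdorffness even when $x$ has a minimal open neighborhood (the exact situation in which the unpadded net space, and likewise the paper's discretized subspace in its first case, would fail to be Hausdorff). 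All the verifications you flag as routine do go through: directedness of $\Lambda\times\omega$ makes the tails a valid neighborhood base at $\infty$, continuity follows from the choice $s_V\in V$, and $f^{\leftarrow}[U]=\{\infty\}$ is not open since every tail is nonempty. What the paper's approach buys is economy --- the test space is just a refinement of a subspace of $X$ --- while yours buys uniformity and makes explicit the net-convergence intuition (``a point in the closure of a set is the limit of a net from that set, and the net's domain-plus-limit can always be made Hausdorff''). Both constructions produce test spaces with at most one non-isolated point, so your argument equally supports the paper's closing remark that $\mathbf H$ could be refined to the class of discrete or almost discrete Hausdorff spaces.
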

\begin{proof}
    Suppose $x\in\overline{H}\setminus H$ for some subset $H\subseteq X$.
    
    If $x\in\overline{\{h\}}$
    for some $h\in H$, consider $f:\omega+1\to X$ with $f(n)=h$ and
    $f(\omega)=x$. This $f$ is continuous, and $f^\leftarrow[H]=\omega$
    is not closed, so it follows that $H$ is not $\mathbf H$-closed.

    Otherwise, let $Z$ be the refinement of the subspace $H\cup\{x\}$ isolating
    each point of $H$. Then $Z$ is Hausdorff, since as $x\not\in\overline{\{h\}}$
    there is some neighborhood of $x$ disjoint from $\{h\}$. Then if
    $f:Z\to X$ is the inclusion map, note $f^\leftarrow[H]=H$ is not closed
    in $H\cup\{x\}$, so it follows that $H$ is not $\mathbf H$-closed.
\end{proof}

\begin{remark}
This proof shows that
we could have refined $\mathbf H$ to the family of all discrete or almost discrete \piBaseP{203} Hausdorff spaces.
\end{remark}

\section{A generalized framework for separation axioms}

With the framework of $\mathbf C$-generated spaces, comes a dual notion of a $\mathbf C$-Hausdorff space.

\begin{definition}
    Let $\mathbf C$ be a class of topological spaces.
    We say a space $X$ is \term{$\mathbf C$-Hausdorff} if its diagonal
    $\Delta=\{\tuple{x,x}:x\in X\}\subseteq X^2$ is
    $\mathbf C$-closed in $X^2$.
\end{definition}

First, we note the contravariance between $\mathbf C$-generated and $\mathbf C$-Haudorff notions.

\begin{theorem}
\label{contravariance1}
    If every member of $\mathbf C$ is $\mathbf D$-generated, then
    then every $\mathbf D$-Hausdorff space is $\mathbf C$-Hausdorff.
\end{theorem}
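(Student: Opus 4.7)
The plan is to unwind the definitions and observe that the proof is essentially a two-step chase: the $\mathbf{D}$-generation hypothesis on members of $\mathbf{C}$ lets us ``factor'' any test map from $\mathbf{C}$ through a test map from $\mathbf{D}$, to which we then apply the $\mathbf{D}$-Hausdorff assumption.

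More concretely, let $X$ be $\mathbf{D}$-Hausdorff; to show $X$ is $\mathbf{C}$-Hausdorff we must verify that for every $Z \in \mathbf{C}$ and every continuous $f\colon Z \to X^2$, the preimage $f^\leftarrow[\Delta]$ is closed in $Z$. First I would note that, by hypothesis, $Z$ itself is $\mathbf{D}$-generated, so a subset of $Z$ is closed if and only if it is $\mathbf{D}$-closed. Thus it suffices to prove $f^\leftarrow[\Delta]$ is $\mathbf{D}$-closed in $Z$.

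To that end, take any $W \in \mathbf{D}$ and any continuous $h\colon W \to Z$. Then $f\circ h\colon W \to X^2$ is continuous, and
\[
h^\leftarrow\bigl[f^\leftarrow[\Delta]\bigr] = (f\circ h)^\leftarrow[\Delta],
\]
which is closed in $W$ because $X$ is $\mathbf{D}$-Hausdorff, i.e., $\Delta$ is $\mathbf{D}$-closed in $X^2$. Since $W$ and $h$ were arbitrary, $f^\leftarrow[\Delta]$ is $\mathbf{D}$-closed in $Z$, and therefore closed in $Z$ by $\mathbf{D}$-generation of $Z$.

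There is no real obstacle here: the argument is a direct diagram chase, and the only subtlety is remembering that being $\mathbf{D}$-generated gives us not just a statement about $Z$'s own topology but a useful criterion for closedness in $Z$ in terms of test maps from $\mathbf{D}$. Everything else is just composing continuous maps and unraveling the definition of $\mathbf{D}$-Hausdorff.
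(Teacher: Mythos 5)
Your argument is correct and is essentially the same as the paper's: both pull back the diagonal along an arbitrary test map $f\colon Z\to X^2$ with $Z\in\mathbf C$, show $f^\leftarrow[\Delta]$ is $\mathbf D$-closed by precomposing with test maps from $\mathbf D$ and invoking the $\mathbf D$-Hausdorff hypothesis, and then conclude closedness from $Z$ being $\mathbf D$-generated. No gaps and no meaningful divergence from the published proof.
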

\begin{proof}
    Let $X$ be $\mathbf D$-Hausdorff, let $Z\in\mathbf C$, and let
    $f\colon Z\to X\times X$ be continuous.  For every $W\in\mathbf D$ and continuous $h:W\to Z$, note by the $\mathbf D$-Hausdorff condition, 
    $h^\leftarrow[f^\leftarrow[\Delta_X]]$ is closed.  It then follows by definition that $f^\leftarrow[\Delta_X]$ is $\mathbf D$-closed.  By assumption, since $Z$, as a member of $\mathbf C$, is $\mathbf D$-generated, we see that $f^\leftarrow[\Delta_X]$ is closed, and since $f$ was arbitrary, $\Delta_X$ is $\mathbf C$-closed.
\end{proof}

\begin{corollary}
    If $\mathbf C\subseteq\mathbf D$, then every $\mathbf C$-generated space
    is $\mathbf D$-generated, and every $\mathbf D$-Hausdorff space is
    $\mathbf C$-Hausdorff.
\end{corollary}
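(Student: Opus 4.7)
The plan is to handle each of the two implications separately, with both reducing to the observation that $\mathbf D$ contains more ``test spaces'' than $\mathbf C$, so the $\mathbf D$-closed condition is \emph{a priori} stronger than the $\mathbf C$-closed condition.

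For the first implication, I would take a $\mathbf C$-generated space $X$ and a $\mathbf D$-closed set $A\subseteq X$, and show $A$ is closed. Since $\mathbf C\subseteq\mathbf D$, every continuous $f\colon Z\to X$ with $Z\in\mathbf C$ is in particular a continuous map from a space in $\mathbf D$, so $f^\leftarrow[A]$ is closed by hypothesis. Thus $A$ is $\mathbf C$-closed, hence closed by $\mathbf C$-generatedness of $X$. This is essentially a ``larger test class gives a finer generated topology'' observation.

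For the second implication, I would appeal directly to Theorem \ref{contravariance1}. To apply it, I need every member of $\mathbf C$ to be $\mathbf D$-generated. Since $\mathbf C\subseteq\mathbf D$ and by Remark \ref{basic-stuff} we have $\mathbf D\subseteq\gen{\mathbf D}$, every $Z\in\mathbf C$ lies in $\gen{\mathbf D}$, i.e., is $\mathbf D$-generated. Theorem \ref{contravariance1} then immediately yields that every $\mathbf D$-Hausdorff space is $\mathbf C$-Hausdorff.

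There is no real obstacle here; both halves are one-line consequences of the definitions together with Theorem \ref{contravariance1} and the remark that each class is contained in its generated closure. The only mild subtlety is remembering, for the second half, that membership in $\mathbf D$ automatically entails being $\mathbf D$-generated, which is what lets the hypothesis of Theorem \ref{contravariance1} be satisfied from the containment $\mathbf C\subseteq\mathbf D$ alone.
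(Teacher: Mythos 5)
Your proposal is correct and follows the same route as the paper: the first implication is unwound directly from the definitions (a $\mathbf D$-closed set is in particular $\mathbf C$-closed when $\mathbf C\subseteq\mathbf D$), and the second is obtained by feeding $\mathbf C\subseteq\mathbf D\subseteq\gen{\mathbf D}$ into Theorem \ref{contravariance1}. Nothing to add.
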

\begin{proof}
    The first statement is immediate from the definitions, and the second follows from Theorem \ref{contravariance1}.
\end{proof}

We close with some concrete examples, dual to the picture outlined in the preceding section.

\begin{theorem}
    A space is $T_0$ if and only if it is $\mathbf{P}$-Hausdorff.
\end{theorem}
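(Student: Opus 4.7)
The plan is to unpack what $\mathbf{P}$-Hausdorff says when one uses the very restrictive structure of the indiscrete two-point space $P=\{0,1\}$. First I would note two things about $P$: its only closed sets are $\emptyset$ and $P$, and a map $f\colon P\to Y$ into an arbitrary space $Y$ is continuous precisely when $f(0)$ and $f(1)$ are topologically indistinguishable in $Y$. Combining these, $\Delta\subseteq X^2$ is $\mathbf{P}$-closed if and only if, whenever $p,q\in X^2$ are topologically indistinguishable, either both or neither of them lie in $\Delta$.

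Next I would invoke the standard fact that topological indistinguishability in a product is coordinate-wise: $(a_0,b_0)$ and $(a_1,b_1)$ are indistinguishable in $X^2$ iff $a_0,a_1$ are indistinguishable in $X$ and $b_0,b_1$ are indistinguishable in $X$. (One direction uses the projections, which are continuous; the other uses that basic open sets in $X^2$ are products.)

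For the forward direction, suppose $X$ is $T_0$. Then topologically indistinguishable points of $X$ are equal, so by the coordinate-wise criterion any pair of indistinguishable points in $X^2$ is a single point; the ``both or neither'' condition is then vacuous, and $\Delta$ is $\mathbf{P}$-closed.

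For the backward direction I would prove the contrapositive. Suppose $X$ fails $T_0$, and pick distinct $a,b\in X$ that are topologically indistinguishable. Then $(a,a)$ and $(a,b)$ are coordinate-wise indistinguishable in $X^2$, so the map $f\colon P\to X^2$ with $f(0)=(a,a)$ and $f(1)=(a,b)$ is continuous, yet $f^\leftarrow[\Delta]=\{0\}$ is not closed in $P$, so $\Delta$ fails to be $\mathbf{P}$-closed. The proof is really just unwinding the definitions, so there is no serious obstacle; the only point requiring a moment of care is the coordinate-wise characterization of indistinguishability in $X^2$.
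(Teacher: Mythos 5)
Your proof is correct and follows essentially the same route as the paper's: both reduce the problem to the observation that a continuous map $P\to X^2$ must send $0$ and $1$ to topologically indistinguishable points (the paper phrases this concretely via sets of the form $U\times X$ rather than stating the indistinguishability lemma), and both use the identical map $f(0)=\tuple{x,x}$, $f(1)=\tuple{x,y}$ for the converse. Your packaging through coordinate-wise indistinguishability in $X^2$ is a slightly more conceptual organization of the same argument, and all the supporting claims you cite are standard and correct.
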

\begin{proof}
    Let $X$ be $T_0$ and consider $f:P\to X^2$ continuous.
    We will show that if $f(0)=\tuple{x,x}$ then $f(1)=\tuple{y,y}$; in turn,
    $f^\leftarrow[\Delta]$ is always $\emptyset$ or $P$ (and thus closed).
    If $f(1)=\tuple{y,z}$ with $y\not=z$, assume (without loss of generality)
    that $y\not=x$. Then either we have an open set $U$ with $x\in U,y\not\in U$
    or $x\not\in U,y\in U$. It follows then either $f^\leftarrow[U\times X]=\{0\}$
    or $f^\leftarrow[U\times X]=\{1\}$; either way, $f^\leftarrow[U\times X]$
    is not open, a contradiction.

    If $X$ is not $T_0$, let $x,y$ be distinct but topologically indistinguishable
    points.
    Then $f:P\to X^2$ defined by $f(0)=\tuple{x,x},f(1)=\tuple{x,y}$ is
    continuous, but $f^\leftarrow[\Delta]=\{0\}$ is not closed, showing
    $X$ is not $\mathbf P$-Hausdorff.
\end{proof}
\begin{theorem}
    A space is $T_1$ if and only if it is $\mathbf A$-Hausdorff.
\end{theorem}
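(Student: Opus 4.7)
The plan is to mirror the structure of the preceding $T_0 \Leftrightarrow \mathbf{P}$-Hausdorff argument. The key observation I would record first is a characterization of continuity for maps out of the Sierpinski space $A$: since the only non-open subset of $A$ is $\{1\}$, a function $f\colon A\to Y$ into any space $Y$ is continuous if and only if every open set containing $f(1)$ also contains $f(0)$, equivalently $f(0)\in\overline{\{f(1)\}}$.

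For the forward direction, I would assume $X$ is $T_1$ and apply this observation with $Y=X^2$. Since $X^2$ is also $T_1$, every singleton in $X^2$ is closed, so the characterization forces $f(0)=f(1)$ for every continuous $f\colon A\to X^2$. Each such $f$ is thus constant, making $f^\leftarrow[\Delta]$ either $\emptyset$ or all of $A$, both closed; hence $\Delta$ is $\mathbf A$-closed in $X^2$.

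For the converse, I would assume $X$ is not $T_1$ and produce an explicit continuous $f\colon A\to X^2$ with $f^\leftarrow[\Delta]$ not closed in $A$. Failure of $T_1$ yields distinct points $x,y\in X$ with $y\in\overline{\{x\}}$ (after possibly swapping names). Define $f(0)=\tuple{x,x}$ and $f(1)=\tuple{y,x}$. Any basic open rectangle $V\times W$ in $X^2$ containing $\tuple{y,x}$ has $y\in V$ and $x\in W$, and since $y\in\overline{\{x\}}$ we also have $x\in V$, so $\tuple{x,x}\in V\times W$. This verifies continuity of $f$, while $f^\leftarrow[\Delta]=\{0\}$ is not closed in $A$, so $X$ fails to be $\mathbf A$-Hausdorff.

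I do not anticipate a real obstacle: the asymmetry between $0$ and $1$ in $A$ exactly encodes the specialization preorder, so the $T_1$ hypothesis collapses all continuous maps from $A$ into $X^2$ to constants in one direction, while any specialization pair in $X$ transparently breaks the diagonal in the other.
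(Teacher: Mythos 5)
Your proof is correct, and it works directly from the paper's stated (diagonal) definition of $\mathbf A$-Hausdorff: you analyze continuous maps $f\colon A\to X^2$ and decide whether $f^\leftarrow[\Delta]$ is closed. The paper's own proof instead verifies an equivalent reformulation in terms of pairs of continuous maps $f,g\colon A\to X$ whose images are disjoint on a neighborhood of any point where they differ; that version is not derived from the definition in the visible text, so your argument is the more self-contained of the two. Both ultimately rest on the same observation --- continuous maps out of the Sierpinski space are exactly the specialization pairs, which $T_1$ collapses to constants --- so the mathematical content is essentially the same. One small correction to your write-up: the ``key observation'' states the closure form backwards. The condition ``every open set containing $f(1)$ also contains $f(0)$'' is equivalent to $f(1)\in\overline{\{f(0)\}}$, not to $f(0)\in\overline{\{f(1)\}}$ (compare the paper's own use of $A$ in the Alexandrov example, where $f(0)=y$, $f(1)=x$ and $y$ lies in every neighborhood of $x$). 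The slip is harmless here: in the forward direction $T_1$ forces $f(0)=f(1)$ under either reading, and in the converse you verify continuity of your explicit $f$ directly against basic open rectangles using the (correct) open-set formulation, so the misstated closure form is never actually invoked. But you should fix the statement before relying on it elsewhere.
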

\begin{proof}
    Let $X$ be $T_1$ and let $f,g:A\to X$ continuous have $f(z)\not=g(z)$.
    If $z=0$ we have $f[\{0\}]\cap g[\{0\}]=\emptyset$ immediately.
    If $z=1$, pick $U$ open with $f(1)\in U$ but $U\cap g[A]=\emptyset$.
    Then $f^\leftarrow[U]$ is an open neighborhood of $1\in A$, so
    $f^\leftarrow[U]=A$. Then $f[A]=U$ and thus $f[A]\cap g[A]=\emptyset$.

    Let $X$ be not $T_1$, so there exist $x\not=y$ where every neighborhood
    of $x$ contains $y$. Then $f,g:A\to X$ defined by $f(0)=y,f(1)=x$
    and $g(0)=y,g(1)=y$ are continuous with $f(0)\not=g(0)$. Then
    $f[\{0\}]\cap g[\{0\}]=\emptyset$, showing $X$ is not $\mathbf A$-Hausdorff.
\end{proof}

\begin{theorem}
    A space is US (resp. UR) if and only if it is $\mathbf S$-
    (resp. $\mathbf R$-) Hausdorff.
\end{theorem}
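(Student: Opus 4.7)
The plan is to prove both equivalences in parallel, since the indexing classes $\mathbf{S}$ (the converging sequence $\omega+1$) and $\mathbf{R}$ (successors $\kappa+1$ of regular cardinals with points below $\kappa$ isolated) behave analogously: a continuous map $f\colon\kappa+1\to X^2$ splits into coordinate maps $f_1,f_2$, whose continuity at $\kappa$ is exactly the assertion that $(f_i(\beta))_{\beta<\kappa}$ is a (transfinite) sequence with limit $f_i(\kappa)$. First I would note that in these topologies on $\kappa+1$, a set is closed iff it either contains $\kappa$ or is bounded in $\kappa$; consequently $f^\leftarrow[\Delta]$ fails to be closed precisely when $f_1(\kappa)\neq f_2(\kappa)$ but $f_1(\beta)=f_2(\beta)$ on an unbounded set $S\subseteq\kappa$.

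For the forward direction ($\mathrm{US}\Rightarrow\mathbf{S}$-Hausdorff and $\mathrm{UR}\Rightarrow\mathbf{R}$-Hausdorff), I suppose $f^\leftarrow[\Delta]$ is not closed and extract the unbounded $S$ above. Enumerating $S$ in increasing order gives a (transfinite) sequence $\beta\mapsto f_1(\beta)=f_2(\beta)$, and for any neighborhood $U$ of $f_i(\kappa)$ the continuity of $f_i$ provides a $\gamma<\kappa$ past which all values lie in $U$; since $S$ is unbounded, a tail of the enumeration lies above $\gamma$ as well, so the enumerating sequence converges to both $f_1(\kappa)$ and $f_2(\kappa)$, contradicting US (resp. UR).

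For the converse direction I start with distinct limits $a\neq b$ of a transfinite sequence $h\colon\alpha\to X$. Setting $\kappa=\operatorname{cf}(\alpha)$ and choosing a cofinal subset of $\alpha$ of order type $\kappa$, I reindex to get $g\colon\kappa\to X$ which still has $a$ and $b$ as limits (the tail condition passes through cofinal subsets). I then define $f\colon\kappa+1\to X^2$ by $f(\beta)=(g(\beta),g(\beta))$ for $\beta<\kappa$ and $f(\kappa)=(a,b)$. Continuity of $f$ at $\kappa$ follows because a basic neighborhood of $(a,b)$ is a product $U_a\times U_b$ and each coordinate sequence eventually enters its factor; but $f^\leftarrow[\Delta]=\kappa$, which is not closed in $\kappa+1$ since $\{\kappa\}$ is not open there, contradicting $\mathbf{R}$-Hausdorffness. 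The US case is recovered by specializing to $\alpha=\kappa=\omega$.

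The main obstacle is essentially bookkeeping: verifying the open/closed set description in the $\mathbf{R}$-topology on $\kappa+1$, and ensuring the cofinal reindexing $g$ actually inherits both limits from $h$. Both points reduce to unwinding the transfinite-limit definition against the regularity of $\kappa$, so no genuine set-theoretic subtlety should arise, and Theorem \ref{ur-inj} is not needed here.
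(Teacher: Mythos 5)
Your proof is correct and follows essentially the same route as the paper's: agreement of the coordinate maps on an unbounded set yields a transfinite sequence converging to both $f_1(\kappa)$ and $f_2(\kappa)$, and conversely a doubly convergent sequence yields a diagonal map into $X^2$ whose preimage of $\Delta$ equals $\kappa$, which is not closed in $\kappa+1$. You are in fact somewhat more explicit than the paper, which glosses over the reduction of an arbitrary limit-ordinal index to its regular cofinality in the converse direction.
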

\begin{proof}
    Let $X$ be US/UR and $f,g:\kappa+1\to X$. If $f(\alpha)\not=g(\alpha)$
    for $\alpha<\kappa$ then $f[\{\alpha\}]\cap g[\{\alpha\}]=\emptyset$
    and we're done. If $f(\kappa)\not=g(\kappa)$,
    $f,g$ must have $f[U]\cap g[U]=\emptyset$ for some neighborhood of $\kappa$;
    if not, let
    $x_\alpha\in f[\kappa\setminus \alpha]\cap g[\kappa\setminus \alpha]$,
    and note $x_n$ converges to both $f(\kappa)$ and $g(\kappa)$.

    Similarly, if $X$ is not US/UR and $x_\alpha$ converges to both $x,y$, then
    we may define $f,g:\kappa+1\to X$ continuous by
    $f(\alpha)=g(\alpha)=x_\alpha$ and
    $f(\kappa)=x,g(\kappa)=y$, and thus $f[U]\cap g[U]\not=\emptyset$
    for every neighborhood $U$ of $\kappa$.
\end{proof}

\begin{theorem}
    A space is UCR if and only if it is $\mathbf{CR}$-Hausdorff.
\end{theorem}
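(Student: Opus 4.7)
The plan is to closely mirror the structure of the preceding theorem characterizing US and UR, with one new subtlety: elements $\kappa+1$ of $\mathbf{CR}$ have non-isolated points below $\kappa$ when $\kappa$ is uncountable regular, so the argument at ``interior'' limit ordinals cannot be settled by an appeal to isolation. This will be handled by a transfinite induction in the forward direction, while the reverse direction is essentially the same as before.

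For the reverse direction, assume $X$ is $\mathbf{CR}$-Hausdorff and let $s:\lambda \to X$ be a continuous transfinite sequence converging to both $p$ and $q$. First I reduce to the case that $\lambda$ is a regular cardinal by composing $s$ with a continuous strictly increasing cofinal map $\phi:\operatorname{cf}(\lambda) \to \lambda$; the composition $s \circ \phi$ is continuous and still converges to both points. Then I extend to $f, g:\lambda+1 \to X$ with $f(\lambda) = p$, $g(\lambda) = q$, both continuous by the convergence hypotheses. Applying $\mathbf{CR}$-Hausdorff to $h = (f, g):\lambda+1 \to X^2$, the preimage $h^\leftarrow[\Delta]$ must be closed; yet it contains $\lambda$ cofinally but not $\lambda$ itself unless $p = q$.

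For the forward direction, assume $X$ is UCR and fix continuous $h = (f, g):\kappa+1 \to X^2$ for $\kappa$ regular. Let $A = \{\eta \leq \kappa : f(\eta) = g(\eta)\}$. I prove by transfinite induction on limit ordinals $\eta$ that if $A \cap \eta$ is cofinal in $\eta$ then $\eta \in A$; since limit points of $A$ in $\kappa+1$ are exactly such $\eta$, this gives $A$ closed. When $\operatorname{cf}(\eta) = \omega$, a cofinal $\omega$-sequence $\alpha_n \in A \cap \eta$ produces a sequence $f(\alpha_n) = g(\alpha_n)$ converging to both $f(\eta)$ and $g(\eta)$, and US (a consequence of UCR) forces the limits to agree. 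When $\operatorname{cf}(\eta) = \mu > \omega$, I build by recursion a continuous strictly increasing cofinal sequence $(\alpha_\delta)_{\delta < \mu}$ in $\eta$ with every $\alpha_\delta \in A$: successor stages use cofinality of $A$ in $\eta$; at a limit stage $\rho < \mu$ I set $\alpha_\rho = \sup_{\delta < \rho}\alpha_\delta$, note that $\alpha_\rho < \eta$ since $\rho < \mu = \operatorname{cf}(\eta)$, and apply the inductive hypothesis at $\alpha_\rho$ (its $A$-cofinal subset being the previously constructed $\{\alpha_\delta\}_{\delta<\rho}$) to get $\alpha_\rho \in A$. The composition $\delta \mapsto f(\alpha_\delta) = g(\alpha_\delta)$ is then a continuous transfinite sequence of length $\mu$ converging to both $f(\eta)$ and $g(\eta)$, so UCR yields $\eta \in A$.

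The main obstacle is exactly this limit stage in the inductive construction: the sup $\alpha_\rho$ need not lie in $A$ a priori. The strict inequality $\alpha_\rho < \eta$ is what makes the induction go through, since it lets me invoke the inductive hypothesis at the strictly smaller ordinal $\alpha_\rho$ (equipped with its ready-made $A$-cofinal tail) to confirm $\alpha_\rho \in A$ before continuing the construction.
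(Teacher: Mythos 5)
Your proof is correct and follows exactly the route the paper intends: the paper's own ``proof'' is a one-line remark that the US/UR argument goes through ``with more care to consider each non-isolated limit ordinal rather than only the maximum,'' and your transfinite induction (with the recursive construction of a continuous increasing sequence through $A$ at stages of uncountable cofinality) is precisely that missing care, worked out in full. The only detail worth adding is that at successor stages you should choose $\alpha_{\delta+1}$ above both $\alpha_\delta$ and the $\delta$-th term of a fixed cofinal $\mu$-sequence in $\eta$, so that the constructed sequence is in fact cofinal in $\eta$.
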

\begin{proof}
    This also follows from the above proof, but with more care to consider
    each non-isolated limit ordinal rather than only the maximum.
\end{proof}

\begin{theorem}
    A space is $T_2$ if and only if it is $\mathbf H$-Hausdorff.
\end{theorem}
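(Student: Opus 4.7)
The plan is to reduce this to the classical characterization of $T_2$ in terms of the closedness of the diagonal, together with Theorem \ref{extremalH}.

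For the forward direction, I would recall that $X$ is $T_2$ if and only if $\Delta_X$ is closed in $X^2$. Once $\Delta_X$ is closed, then for any continuous $f\colon Z\to X^2$ (whether $Z$ is Hausdorff or not), the preimage $f^\leftarrow[\Delta_X]$ is closed in $Z$. In particular, this preimage is closed whenever $Z\in\mathbf H$, so $\Delta_X$ is $\mathbf H$-closed in $X^2$, i.e., $X$ is $\mathbf H$-Hausdorff.

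For the converse, suppose $X$ is $\mathbf H$-Hausdorff, so $\Delta_X$ is $\mathbf H$-closed in $X^2$. By Theorem \ref{extremalH}, every topological space is $\mathbf H$-generated; applying this to $X^2$, every $\mathbf H$-closed subset of $X^2$ is actually closed. Hence $\Delta_X$ is closed in $X^2$, and $X$ is $T_2$.

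There is no real obstacle here: the theorem is essentially a one-line consequence of packaging the classical ``$T_2$ iff diagonal is closed'' criterion into the $\mathbf C$-Hausdorff framework, with Theorem \ref{extremalH} supplying the key input for the nontrivial direction. (Alternatively, one could deduce the converse directly from Theorem \ref{contravariance1} by taking $\mathbf C$ to be the class of all topological spaces and $\mathbf D=\mathbf H$; Theorem \ref{extremalH} says every member of $\mathbf C$ is $\mathbf D$-generated, so $\mathbf H$-Hausdorff implies $\mathbf C$-Hausdorff, which is exactly the statement that $\Delta_X$ is closed in $X^2$.)
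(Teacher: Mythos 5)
Your proposal is correct and matches the paper's argument, which likewise reduces the statement to the classical ``$T_2$ iff $\Delta$ is closed'' criterion and invokes Theorem \ref{extremalH} to identify closed with $\mathbf H$-closed in $X^2$. You have simply spelled out the two directions in more detail than the paper's one-line proof.
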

\begin{proof}
    A space is $T_2$ if and only if its diagonal $\Delta$ is closed,
    and by Theorem \ref{extremalH}, closed is equivalent to $\mathbf H$-closed.
\end{proof}

\section{Questions}

\begin{question}
    Is there some class $\mathbf{OK}$ such that
    $\mathbf{OK}$-Hausdorff is equivalent to UOK?
\end{question}

\begin{question}
    While all the properties listed in Theorem
    \ref{all-props} are distinct, what are sufficient conditions for
    some to become equivalent? For example, all compact KC spaces are RC,
    but \piBaseS{192} is a compact RC space that is not even US; however,
    perhaps RC and KC are equivalent among compact US spaces?
\end{question}

\section{Acknowledgments}

The authors dedicate this paper to the memory of Peter Nyikos.

The first author wants to especially honor his memory. Peter was the
first mathematician outside of the author's institution to take notice
of the author's research on characterizing topological properties using
$\omega$-length games.
The author appreciates Peter's warm encouragement of his work,
and credits it as a huge
enabler of any research accomplishments the author might claim from his early
career, particularly the author's very first publication
answering Peter's question at
\cite{nyikos2014proximal}, asking whether all proximal compact spaces are
Corson compact. Peter is missed, but his work will go on to inspire
mathematicians for many years to come.

\bibliographystyle{amsplain}
\bibliography{ref}

\end{document}